\newtheorem{theorem}{Theorem}[section] 
\newtheorem{lemma}[theorem]{Lemma}
\newtheorem{corollary}[theorem]{Corollary}
\newtheorem*{theorem*}{Theorem}
\newtheorem*{lemma*}{Lemma}
\newtheorem*{corollary*}{Corollary}
\newtheorem*{maintheorem}{Theorem \ref{thm:main}}
\newtheorem*{kawauchicor}{Corollary \ref{cor:kawauchidisks}}
\newtheorem*{twistcor}{Corollary \ref{cor:twistcor}}
\newtheorem*{schoenfliestheorem}{Theorem \ref{thm:schoenfliesball}}
\theoremstyle{definition}
\newtheorem{definition}[theorem]{Definition}
\newtheorem{remark}[theorem]{Remark}
\newtheorem{question}[theorem]{Question}
\newtheorem*{remark*}{Remark}
\newtheorem*{definition*}{Definition}
\newtheorem*{remarks*}{Remarks}
\newtheorem*{addenda*}{Addenda}
\newcommand{\bit}[1]{\textbf{\textit{#1}}} 
\newcommand{\pt}{\textup{pt}}
\newcommand{\sto}{\!\!\xymatrix@C=1em{{}\ar@{~>}[r]&{}}\!\!}
\newcommand{\interior}{\textup{int}}
\newcommand{\cs}{\mathop\#}
\title{Doubles of Gluck twists: A five-dimensional approach}
\author{David Gabai, Patrick Naylor, and Hannah Schwartz}
\date{\today}
\begin{document}

\maketitle

\begin{abstract} 
Using a 5-dimensional perspective, we balance algebraic and geometric handle cancellation to show that doubles of Gluck twists of certain 2-spheres with two minima are standard. This includes all 2-spheres which are unions of ribbon discs, one of which has undisking number one. As an application, we produce new examples of Schoenflies balls not known to be standard.
\end{abstract}

\section{Introduction}

The operation of \emph{Gluck twisting} a smoothly embedded $2$-sphere $S\subset S^4$ (the only non-trivial notion of surgery along a $2$-sphere) has been studied extensively since Gluck's seminal work \cite{Glu62} in the 1960s. The resulting smooth $4$-manifold $\Sigma_S$ is called the \emph{Gluck twist of $S$}. Each Gluck twist is a homotopy $4$-sphere and homeomorphic to $S^4$ by \cite{Fre82sc}. Many families of knotted 2-spheres have the property that their Gluck twists are \emph{standard}, i.e. diffeomorphic to $S^4$, but it remains an open question whether all Gluck twists are standard. Hence, the study of this family of 4-manifolds is motivated mainly by the ongoing search for a counterexample to the smooth 4-dimensional Poincare conjecture.

To show that a Gluck twist is standard, one usually manipulates the 4-manifold $\Sigma_S$ directly using handle calculus and appealing to specific properties of the knotted sphere $S$. In this paper, we use a $5$-dimensional approach to show that certain Gluck twists of the form $\Sigma_{S \cs -S}$ are standard by proving that $\Sigma_S^\circ \times I$ is diffeomorphic to the standard $5$-ball. Our main theorem is the following.

\begin{maintheorem}
Suppose that a 2-sphere $S$ admits a decomposition into two ribbon disks, one of which has undisking number equal to one. Then, the Gluck twist $\Sigma_{S \cs -S}$ is standard.  
\end{maintheorem}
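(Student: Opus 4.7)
The plan is to prove the stronger statement that $\Sigma_S^\circ \times I$ is diffeomorphic to the standard $5$-ball $B^5$. This immediately implies the theorem: the boundary of $\Sigma_S^\circ \times I$ is the double $D(\Sigma_S^\circ)$, and because Gluck twisting distributes across connect sum we have $\Sigma_{S \cs -S} = \Sigma_S \cs \overline{\Sigma_S} = D(\Sigma_S^\circ)$, so that $\Sigma_{S \cs -S} = \partial B^5 = S^4$.

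First, I would build a handle decomposition of $\Sigma_S^\circ$ adapted to the decomposition $S = D_1 \cup D_2$. Viewing $S^4 = B^4_+ \cup B^4_-$ with $D_i$ properly embedded in one of the hemispheres and meeting the equatorial $S^3$ in a link $L = \partial D_1 = \partial D_2$, each ribbon exterior $X_i = B^4_{\pm} \setminus \nu(D_i)$ carries a handle decomposition on a collar of its boundary using only $1$- and $2$-handles. The punctured Gluck twist $\Sigma_S^\circ$ is then obtained by gluing $X_1$ and $X_2$ along a Gluck-modified regluing of $\nu(S)$, yielding a handle diagram with a single $0$-handle together with the $1$- and $2$-handles coming from the $X_i$. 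Crossing with $I$ preserves handle indices, producing a handle decomposition of the $5$-manifold $\Sigma_S^\circ \times I$ with the same handle counts but now carrying the full flexibility of dimension five.

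Next, I would use the undisking hypothesis on $D_1$ to trigger handle cancellation. The single undisking move exhibits a $2$-handle in the diagram of $X_1$ which, after one slide, algebraically cancels a $1$-handle dual to a meridian of $D_1$. Because $\Sigma_S^\circ \times I$ is simply connected (as $\Sigma_S$ is a homotopy sphere), the $5$-dimensional Whitney trick applies: the algebraic cancellation can be realized geometrically by Whitney disks that are embedded and correctly framed, with the ribbon-disk structure supplying natural model Whitney disks. After this geometric cancellation, the remaining configuration corresponds to a sphere of the form $(\text{trivial disk}) \cup D_2$, whose Gluck twist is standard because the twist can be unwound across the trivial hemisphere. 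The residual $1$- and $2$-handles coming from $X_2$ then cancel in pairs in the $5$-manifold, collapsing $\Sigma_S^\circ \times I$ to $B^5$.

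The main obstacle will be the balance between algebraic and geometric cancellation that the abstract explicitly flags. Undisking delivers only one algebraic cancellation for free, so every other handle pair must be matched and cleared using the $5$-dimensional moves, all while tracking how the Gluck twist interacts with the handle structure across the equatorial $S^3$. Executing this carefully—ensuring that the Whitney disks are embedded with correct framing, avoid the other handles, and eliminate precisely the right handle pairs so that no stray $2$-handles obstruct the final collapse to a ball—is the technical heart of the argument, and is exactly where the extra dimension is indispensable.
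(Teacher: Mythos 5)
Your high-level framing is right and matches the paper: translate the claim to $\Sigma_S^\circ\times I\cong B^5$, build a handle decomposition of the $5$-manifold from a ribbon/banded-unlink presentation of $S=D_1\cup D_2$, and try to cancel handles. But the technical core of your sketch rests on a false step that the paper is specifically structured to avoid, and that collapse makes the proposal incomplete.

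The gap is the assertion that because $\Sigma_S^\circ\times I$ is simply connected, ``the $5$-dimensional Whitney trick applies'' to realize the algebraic $2/3$-handle cancellation geometrically. In a $5$-dimensional handlebody the ascending spheres of $2$-handles and descending spheres of $3$-handles are $2$-spheres living in a $4$-dimensional level set; these are middle-dimensional, so simple connectivity does \emph{not} give embedded, correctly framed, interior-disjoint Whitney disks. This is precisely why closed $5$-dimensional $h$-cobordisms can fail to be products (the paper even cites Donaldson for this). The hypothesis $\delta(D_1)=1$ is not being used ``for one free algebraic cancellation'': it is used, via the triangular banded unlink diagram of Lemma~\ref{lem:uppertriangular}, to produce an explicit pre-Whitney disk with a controlled location, whose framing defect of $+1$ is cured by a boundary twist that trades one over-crossing with the Gluck $2$-handle. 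The remaining work (eight steps of tubing and sliding) is what makes $\{R_1,R_2\}$ geometrically dual to $\{G_1,G_2\}$. Your sketch does not supply any of this.

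Two more points that would need repairing. First, after the $2/3$-pairs cancel, the residual $1/2$-pairs do \emph{not} cancel automatically: the paper must check at each slide that the induced presentation of $\pi_1$ stays Andrews--Curtis $(\ell+1)$-trivial (Lemmas~\ref{lem:ACproduct} and~\ref{lem:budac} and the Step~2 computation). Your sketch silently assumes this. Second, the intermediate claim that after cancellation the ``remaining configuration corresponds to a sphere of the form $(\text{trivial disk})\cup D_2$'' is not what the proof produces and is not needed; the argument works entirely at the level of $5$-dimensional handles and never reconstitutes a knotted sphere whose Gluck twist is separately identified as standard.
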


The low \emph{undisking number} (see Definition \ref{def:bandunknottingnumber}) requirement characterizes a subset of spheres admitting a Morse function with two maxima and any number of minima; it is readily satisfied by a plethora of spheres whose Gluck twists are not known to be standard. It also covers other historically interesting cases. For instance, in Remark \ref{rem:gompfex}, we use Theorem \ref{thm:main} to obtain an alternate proof that the double of the Cappell-Shaneson homotopy $4$-sphere $\Sigma_0$ is standard. We also show that most ribbon disks in Kawauchi's table \cite[F.5]{Kaw96book} have undisking number $1$, and so obtain the following corollary.
 
\begin{kawauchicor}
Suppose that a 2-sphere $S$ has one ribbon hemisphere given by a ribbon disk for one of the following knots appearing in \cite[F.5]{Kaw96book}. 
\[\{6_1,8_8,8_9,8_{20}, 9_{27}, 9_{41}, 9_{46}, 10_{48},10_{75}, 10_{87},10_{129},10_{137}, 10_{140},10_{153},10_{155}\}\]
Then, the Gluck twist $\Sigma_{S \cs -S}$ is standard. 
\end{kawauchicor}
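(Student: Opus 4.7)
The plan is to deduce the corollary directly from Theorem \ref{thm:main} by verifying its hypothesis for each of the listed knots. By assumption $S$ decomposes as $D \cup D'$, with $D$ the specified ribbon disk from Kawauchi's table \cite[F.5]{Kaw96book} for one of the fifteen knots and $D'$ a ribbon disk for the mirror. To invoke Theorem \ref{thm:main}, the only remaining ingredient is that $D$ has undisking number exactly $1$, so the whole argument reduces to an undisking number computation for the ribbon disks in the table.

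The core of the argument is therefore a case-by-case diagrammatic check: for each of the fifteen knots, I would read off the explicit ribbon presentation from Kawauchi's table and exhibit a single undisking move (in the sense of Definition \ref{def:bandunknottingnumber}) that converts $D$ into a standard unknotted disk in $B^4$. Only an upper bound on the undisking number is required here, since the fact that each boundary knot $K$ is nontrivial already forces the undisking number to be at least $1$. In practice, each ribbon disk in Kawauchi's table has a small band presentation, so the search space for a suitable simplifying move is finite and tractable; in many cases, flipping or deleting a single band yields an isotopy to a trivial ribbon disk for the unknot, which is precisely the data of an undisking move of length one.

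The main obstacle is bookkeeping rather than any real conceptual difficulty. For the lower-crossing knots in the list such as $6_1$, $8_8$, $8_9$, and $8_{20}$, the simplifying move should be nearly immediate from the ribbon diagrams. For the higher-crossing examples like $10_{137}$, $10_{140}$, $10_{153}$, and $10_{155}$, the band presentations are more elaborate and correctly identifying the band (or pair of dual arcs) whose modification unknots the disk may require some experimentation, as well as the use of band slides and isotopies of the ambient knot diagram to recognize the result as trivial. Organizing the moves uniformly, for instance by recording each ribbon disk as a handle decomposition of its complement and looking for a canceling $1$-$2$ pair after a single alteration, should streamline the verification. Once the single undisking move is exhibited in each of the fifteen cases, Theorem \ref{thm:main} applies immediately and the corollary follows.
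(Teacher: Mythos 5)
Your approach matches the paper's own: exhibit a single band pass move trivializing each of the listed ribbon disks read from Kawauchi's table (this is precisely the paper's Lemma \ref{lem:kawauchidisks}), and then invoke Theorem \ref{thm:main}, noting that nontriviality of the boundary knot forces the undisking number to be at least one. One small caution: the band pass of Definition \ref{def:bandunknottingnumber} is a crossing change passing a band through a component of the unlink $\mathcal{U}$, not a ``flip'' or ``deletion'' of a band, so the informal phrasing in your second paragraph should be corrected before carrying out the case-by-case check.
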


We also consider the family of ribbon disks $D_K$ for the connected sums $K\#-K$ obtained by spinning $K$ through the 4-ball. Applying Theorem \ref{thm:main} gives the following corollary.

\begin{twistcor}
Suppose that $K$ is a twist knot. If a 2-sphere $S$ has one ribbon hemisphere given by $D_K$, then the Gluck twist $\Sigma_{S \cs -S}$ is standard. 
\end{twistcor}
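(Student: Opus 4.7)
The plan is to deduce this corollary directly from Theorem \ref{thm:main}. Given the decomposition $S = D_K \cup D'$, where $D'$ denotes the complementary ribbon hemisphere of $S$, it suffices to verify that $D_K$ has undisking number equal to one whenever $K$ is a twist knot. Once this is established, Theorem \ref{thm:main} applies directly and yields that $\Sigma_{S \cs -S}$ is standard.

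To verify undisking number one for $D_K$, I would first recall the classical fact that every twist knot $K$ has unknotting number equal to one: each admits a distinguished clasp crossing whose change produces the unknot. The main geometric step is then to translate this single crossing change in $K$ into a single undisking move on $D_K$, in the sense of Definition \ref{def:bandunknottingnumber}. The spin construction of $D_K$ sweeps $K$ across a radial parameter of $B^4$ in a manner symmetric about the equatorial connected-sum sphere $S^3 \subset \partial B^4$; consequently, the small 3-ball containing the distinguished crossing in $K$ traces out a 4-dimensional neighborhood in which the crossing-change band in $S^3$ extends naturally to a properly embedded band on $D_K$. Performing the corresponding band move on $D_K$ converts it into the spun ribbon disk for $U \cs -U$, where $U$ denotes the unknot, and this latter disk is the standard trivial disk in $B^4$.

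The main obstacle I expect is rigorously identifying the candidate band on $D_K$ and verifying that it counts as a single undisking move in the precise sense of Definition \ref{def:bandunknottingnumber}. This requires careful tracking of how the standard crossing-change band in $S^3$ is promoted to a properly embedded band in $B^4$ that respects the ribbon presentation of $D_K$, together with an argument that the result of this band surgery is really the trivial spun disk and not, say, some other (possibly nontrivial) ribbon disk for the unknot. Provided these identifications go through cleanly, using the product structure inherent in the spin, the remainder of the corollary is an immediate invocation of Theorem \ref{thm:main}. If the translation step turns out to be more subtle, the backup plan would be to produce an explicit band presentation of $D_K$ directly from a standard diagram of the twist knot $K$ and verify undisking number one by inspection of that presentation.
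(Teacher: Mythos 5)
Your reduction to Theorem~\ref{thm:main} is correct, and the crux is exactly what you identify: one must show $\delta(D_K)=1$ when $K$ is a twist knot. The paper establishes this as Lemma~\ref{lem:undiskingnumtwistknot}, and its proof is simply to exhibit an explicit banded unlink diagram for the spun disk $D_{T_n}$ (with an $n$-twist box and its mirror $(-n)$-box coming from the spin) together with a highlighted band pass that collapses it to the trivial diagram, as in Figure~\ref{fig:twistdisks}. Your ``backup plan'' -- produce an explicit band presentation of $D_K$ from a diagram of $K$ and check one band pass suffices -- is therefore essentially what the paper does.

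Your primary route has a genuine gap. You propose to promote the crossing-change band for $K$ to ``a properly embedded band on $D_K$'' by spinning, and then ``perform the corresponding band move on $D_K$.'' But a band pass in the sense of Definition~\ref{def:bandunknottingnumber} is not band surgery on the disk $D_K$: it is a combinatorial move on a banded unlink diagram $\mathcal D$ that re-embeds one of the already-present fusion bands by pushing it through a component of the unlink $\mathcal U$, producing a new diagram $\mathcal D'$ for a possibly different disk. The spun band you describe is a candidate for surgery on the surface, a different operation, and it is not automatically realized as a band pass move on any particular banded unlink diagram for $D_K$. To close the gap you would need to (a) fix a concrete banded unlink diagram for $D_K$ (e.g.\ coming from a bridge position of $K$, so the minima of $D_K$ correspond to bridge points and the fusion bands to overarcs), and (b) verify that the clasp of the twist knot appears as a single passage of one fusion band through one unlink component, so that a single band pass -- not a surgery -- yields the literal trivial diagram of Definition~\ref{def:trivialbud}. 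That verification is precisely the content of Figure~\ref{fig:twistdisks}. Note also that ``$u(K)=1 \Rightarrow \delta(D_K)=1$'' is not a formal consequence of any inequality: a band pass on $\mathcal D$ induces a \emph{pass move} (two crossing changes) on the boundary $K\#-K$, one in each summand, so the relationship to unknotting number runs in the direction $u(\partial D)\le 2\,\delta(D)$, and one cannot simply read off $\delta$ from $u$ without looking at the diagram. In short: right skeleton, but the ``translation'' step needs to be replaced by the explicit diagrammatic check.
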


If $X$ is a homotopy 4-sphere, then let $X^\circ$ denote the homotopy 4-ball which is the closed complement of a smoothly embedded 4-ball. A homotopy 4-ball is called a \emph{Gluck ball} if it arises from a Gluck twist, and a \emph{Schoenflies ball} if it embeds in the 4-sphere. A corollary to Theorem \ref{thm:main} is the following. 

\begin{schoenfliestheorem}
Suppose that a 2-sphere $S\subset S^4$ admits a decomposition into two ribbon disks, one of which has undisking number equal to one. Then, $\Sigma_S$ is a Schoenflies ball. 
\end{schoenfliestheorem}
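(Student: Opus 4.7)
The plan is to deduce this statement directly from the proof of Theorem \ref{thm:main}, taking advantage of the five-dimensional framework the paper introduces. As noted in the introduction, the Main Theorem is established by proving the stronger assertion that $\Sigma_S^\circ \times I \cong B^5$, and the Schoenflies conclusion will fall out of this by passing to boundaries.

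First I would invoke the identification $\Sigma_S^\circ \times I \cong B^5$ produced in the course of proving Theorem \ref{thm:main}. Taking boundaries gives
\[
\del(\Sigma_S^\circ \times I) \cong \del B^5 = S^4.
\]
Next I would identify $\del(\Sigma_S^\circ \times I)$ with the double $D\Sigma_S^\circ := \Sigma_S^\circ \cup_{\del \Sigma_S^\circ} \Sigma_S^\circ$. Since $\del \Sigma_S^\circ \cong S^3$,
\[
\del(\Sigma_S^\circ \times I) = (\Sigma_S^\circ \times \{0\}) \cup (S^3 \times I) \cup (\Sigma_S^\circ \times \{1\}),
\]
and collapsing the collar $S^3 \times I$ exhibits this boundary as $D\Sigma_S^\circ$, so $D\Sigma_S^\circ \cong S^4$.

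Finally, the double $D\Sigma_S^\circ$ contains $\Sigma_S^\circ \times \{0\}$ as a smoothly embedded codimension-zero submanifold by construction. Composing this inclusion with the diffeomorphism $D\Sigma_S^\circ \cong S^4$ produces a smooth embedding $\Sigma_S^\circ \hookrightarrow S^4$, so $\Sigma_S^\circ$ is a Schoenflies ball by definition.

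All the genuine work is packaged inside Theorem \ref{thm:main}; the present deduction is essentially formal once the $5$-ball identification is in hand. Were one instead to use the Main Theorem only in its closed-form version ($\Sigma_{S \cs -S} \cong S^4$), the sole extra step would be to identify $\Sigma_{S \cs -S}$ with $D\Sigma_S^\circ$ via a standard tubing of the connect sum along a short arc in the boundary sphere—a routine matter rather than a real obstacle—so no step beyond Theorem \ref{thm:main} itself is expected to present difficulty.
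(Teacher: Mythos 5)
Your proposal is correct and is essentially the deduction the paper has in mind (the paper presents this result as an unproved corollary to Theorem \ref{thm:main}): from $\Sigma_S^\circ \times I \cong B^5$ one reads off $\partial(\Sigma_S^\circ \times I) = \Sigma_S^\circ \cup_{S^3} -\Sigma_S^\circ \cong S^4$, which exhibits $\Sigma_S^\circ$ as a codimension-zero submanifold of $S^4$ and hence a Schoenflies ball. The only point worth flagging is that the theorem's phrasing ``\emph{$\Sigma_S$ is a Schoenflies ball}'' is a mild abuse of notation for ``\emph{$\Sigma_S^\circ$ is a Schoenflies ball},'' which you have interpreted correctly.
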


Thus, we obtain many potentially interesting examples from the perspective of the Schoenflies problem, which we hope to explore in future work. 

\subsection*{Organization} The paper is organized as follows. In Section \ref{sec:definitions}, we define banded unlink diagrams for surfaces embedded in $S^4$ or $B^4$, and use these diagrams to construct specific handle decompositions of Gluck twists. Section \ref{sec:5d} describes how 5-dimensional 2- and 3-handles can be canceled under favorable circumstances. We state and prove Theorem \ref{thm:main} in Section \ref{sec:mainresults}. Finally, we conclude with some examples and questions in Section \ref{sec:examples}.

\subsection*{Acknowledgements} 
The first author was partially supported by NSF grant DMS-2003\allowbreak892, the second by an NSERC postdoctoral fellowship and Princeton University, and the third by NSF grant DMS-1502525 in addition to the NSF grant DMS-2204367.

\section{Gluck twists and banded unlink diagrams}\label{sec:definitions}

\subsection{Gluck twists of knotted 2-spheres} 

Throughout this paper, we work in the smooth orientable category. If $M$ is a closed manifold, we will write $M^\circ$ for the compact manifold obtained by removing a small open ball from $M$. We begin with the precise definition of a Gluck twist, first introduced by Gluck \cite{Glu62}.

\begin{definition}\label{def:glucktwist} Let $S$ be a $2$-sphere smoothly embedded in a $4$-manifold $X$ with trivial normal bundle $N\cong S^2\times D^2$. The \bit{Gluck twist of $X$ along $S$} is the closed 4-manifold $$\Sigma_S(X)= (X-\interior(N)) \cup_\tau N$$ where $\tau$ is the automorphism of $\partial N \cong S^2 \times S^1$ sending $(x, \theta) \mapsto (r_\theta(x), \theta)$, and $r_\theta$ is the rotation of $S^2$ by $\theta$ about some fixed axis. 
\end{definition}

By work of Gluck \cite{Glu62}, the map $\tau$ is essentially the unique non-trivial re-gluing map up to isotopy. Thus, although there appear to be choices in the above definition, $\Sigma_S(X)$ is well defined up to diffeomorphism. When $S$ is null-homotopic in $X$, the Gluck twist $\Sigma_S(X)$ is homotopy equivalent to $X$; hence when $X$ is simply connected, Freedman's work \cite{Fre82sc} implies that $\Sigma_S(X)$ is homeomorphic to $X$. For additional exposition, see \cite{KaPoRa22}. In the case $X=S^4$, we will simply write $\Sigma_S$ to denote the Gluck twist of $S^4$ along $S$. 

Since the introduction of this construction, it has remained an open question whether all Gluck twists on spheres in $S^4$ are \emph{standard}, i.e. diffeomorphic to the standard $4$-sphere. Families of $2$-spheres embedded in $S^4$ (which we also refer to as \emph{$2$-knots} or \emph{knots}) whose Gluck twists are known to be standard include:

\begin{itemize}
    \item spun knots, and more generally ribbon knots \cite{Glu62}
    \item twist spun knots \cite{Gor76} \cite{Pao78};
    \item knots 0-cobordant to knots with trivial Gluck twists \cite{MelvinThesis};
    \item even degree satellites of spheres with trivial Gluck twists, or arbitrary satellites in which both the pattern and companion knots have trivial Gluck twists \cite{Seu20};
    \item 2-knots which admit a regular homotopy to the unknot consisting of a single finger move and Whitney move \cite{NaySch20};
    \item other examples, including certain 2-knots arising in the Cappell-Shaneson construction, and the family of 2-knots studied by Nash and Stipsicz \cite{NasSti12}.
\end{itemize}

Gluck twists on $S^4$ that are not known to be standard suggest potential counterexamples to the smooth $4$-dimensional Poincar\'e conjecture. While the Gluck twisting operation has been used to change the smooth structure of some \emph{non-orientable} manifolds \cite{Akb88}, we are unaware of any example in a simply-connected or orientable setting. 

\subsection{Banded unlink diagrams for surfaces in the 4-sphere}

In the next section, we will draw handlebody diagrams for Gluck twists. To begin, we briefly review the notion of a banded unlink diagram. 

\begin{definition}\label{def:bud} We call the product $I \times I$ of two unit intervals a \bit{band} once either the $\{0,1\} \times I$ or $I \times \{0,1\}$ portion of its boundary is fixed as its \bit{attaching region}. A \bit{banded unlink diagram} $\mathcal D$ consists of an unlink $\mathcal U = U_1 \cup \dots \cup U_n$ in the $3$-sphere, together with a collection of $k$ 
bands $\beta=\{\beta_1, \dots, \beta_k\}$ embedded disjointly into $S^3$ so that only the \bit{attaching region} of each intersects $\mathcal U$. The \bit{resolution} of $\mathcal U$ along the bands in $\beta$ is the link $L_\beta = L_1 \cup \dots \cup L_m$ in the $3$-sphere obtained from $\mathcal U$ by replacing the attaching region of each band by the complementary portion of its boundary. The diagram $\mathcal D$ is called \bit{closed} if the link $L_\beta$ is an unlink. 
\end{definition}

Banded unlink diagrams corresponding to surfaces in $S^4$ were first studied by Yoshikawa \cite{Yos94}. Banded unlink diagrams of closed surfaces in arbitrary smooth 4-manifolds (i.e., a handlebody diagram of a $4$-manifold together with a banded unlink describing an embedded surface) were studied extensively by Hughes, Kim, and Miller \cite{HugKimMil18}.

\begin{definition}
Two embedded bands are \bit{dual} if they consist of the same embedding of $I \times I$, but have complementary attaching regions. The \bit{dual} of a closed banded unlink diagram is the diagram obtained by swapping the roles of the unlinks $\mathcal U$ and $L_\beta$, and taking the \bit{dual bands} $\beta'_1, \dots, \beta'_k$ in place of the original ones. 
\end{definition}

Examples of banded unlink diagrams are given in Figures \ref{fig:CSbud} and \ref{fig:foxbud}.

\begin{figure}[htbp]
    \begin{minipage}{0.49\textwidth}
    \centering
    \includegraphics[width=0.7\textwidth]{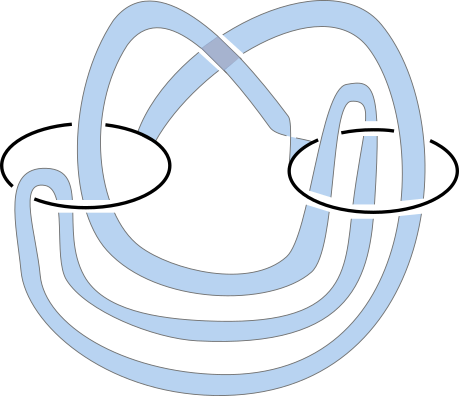}
    \end{minipage}
    \hfill
    \begin{minipage}{0.49\textwidth}
    \centering
    \includegraphics[width=0.7\textwidth]{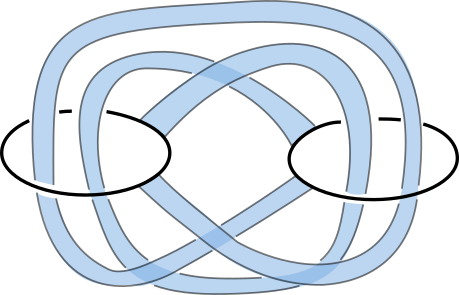}
    \end{minipage}

    \begin{minipage}[t]{0.49\textwidth}
    \caption{A banded unlink diagram corresponding to a ribbon disk in the $4$-ball for the knot $8_9$.}
    \label{fig:CSbud}
    \end{minipage}
    \hfill
    \begin{minipage}[t]{0.49\textwidth}
    \caption[]{A \emph{closed} banded unlink diagram corresponding to the 2-twist spun trefoil in $S^4$. This is Example 12 from \cite{Fox62book}.\protect\footnotemark}
    \label{fig:foxbud}
    \end{minipage}
\end{figure}

\footnotetext{This banded unlink diagram describes this 2-knot as the union of two different ribbon disks for the Stevedore knot, as in \cite{MelvinThesis}. That this is \emph{also} the 2-twist spun trefoil appears to have been first noticed by Litherland; another proof appears in \cite{Ka83}.}

A closed banded unlink diagram $\mathcal D$ determines an embedding of a closed surface $S_\mathcal D \subset S^4$ that is in Morse position with respect to the radial height function $h:S^4\to \mathbb{R}$ and such that $h\vert_{S_\mathcal{D}}$ is self-indexing. Viewing the saddles ``rectilinearly", the banded unlink diagram $\mathcal D$ is the pre-image of $S_\mathcal D$ in the level set $S^3 = h^{-1}(1)$ where the $1$-handles of the surface appear as flat bands in $\mathcal{D}$. The components of the unlink $\mathcal U$ are the ascending circles of the minima of $S_\mathcal D$, while (in the closed case) the components of the unlink $L_\beta$ are the descending circles of its maxima.
\footnote{Note that both the unlinks $\mathcal U$ and $L_\beta$ bound a unique set of spanning disks up to isotopy rel boundary in the $4$-ball, by \cite{Liv82}, and so the banded unlink completely specifies the embedding.} 
The dual to a closed banded unlink diagram determines the same embedding up to isotopy (but with the height function reversed). A banded unlink diagram in which $L_\beta$ is not the unlink determines a properly embedded ribbon surface $S_\mathcal D\subset B^4$ with boundary $L_\beta\subset S^3$. 

\begin{definition} \label{def:moves} 
We say that the embedded surface $S_\mathcal D \subset S^4$ \bit{corresponds to} the banded unlink diagram $\mathcal D$. The isotopy class of the corresponding surface is preserved by each of the following moves on banded unlink diagrams, illustrated in Figure \ref{fig:moves}: \bit{band slides}, \bit{band swims}, and \bit{cancellations/introductions}. 
\end{definition}

Yoshikawa \cite{Yos94} described these moves and conjectured they were sufficient to pass between any two diagrams corresponding to isotopic embedded surfaces in $S^4$. This question was answered affirmatively by Swenton \cite{Swe01}; Kearton and Kurlin also give an alternate proof \cite{KK08}. 

\begin{figure}[ht]
\includegraphics[width=\textwidth]{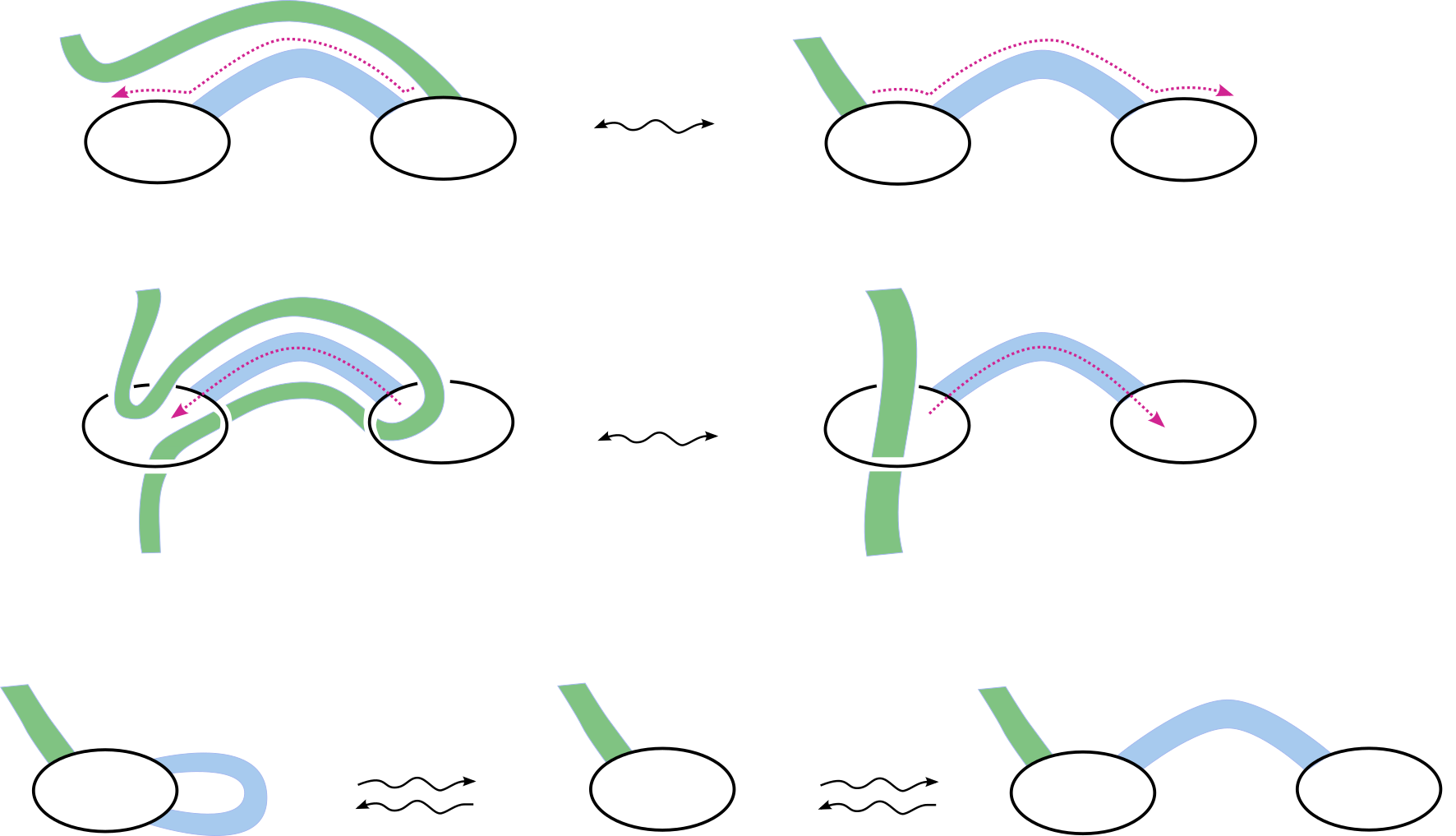}
\put(-262,103){\small Band swim}
\put(-262,196){\small Band slide}
\put(-262,196){\small Band slide}
\put(-330,-4){\small Intro 1/2}
\put(-334,22){\small Cancel 1/2}
\put(-193,22){\small Intro 0/1}
\put(-195,-4){\small Cancel 0/1}
\caption{Moves on a banded unlink diagram that preserve the isotopy class of the corresponding surface, as in Definition \ref{def:moves}.} 
\label{fig:moves}
\end{figure}


When the surface $S_{\mathcal D}$ corresponding to the banded unlink diagram $\mathcal D$ is a $2$-sphere, an Euler characteristic computation gives that $k-n +1 = m - 1$. In this case, it is a consequence of elementary Morse theory that the $k$ bands of the diagram admit a particularly convenient partition, giving the \emph{normal form} of \cite{KSS82}.
\begin{lemma}\label{fisfus}
 After a sequence of band slides (as in Definition \ref{def:moves}), the bands of a banded unlink diagram $\mathcal D$ corresponding to an embedded $2$-sphere can be relabelled and reindexed as a set of \bit{fusion bands} $\beta_1, \dots, \beta_{n-1}$ and a set of \bit{fission bands} $\delta_1, \dots, \delta_{m-1}$ such that 
 \begin{enumerate}
     \item[(1)] each fusion band $\beta_i$ connects $U_i$ to $U_{i+1}$;
     \item[(2)] the dual $\delta_i'$ of each fission band connects $L_i$ to $L_{i+1}$.
 \end{enumerate}
\end{lemma}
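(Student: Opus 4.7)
The starting point is an Euler characteristic count. Since $S$ is a $2$-sphere and the banded unlink diagram encodes a handle decomposition with $n$ $0$-handles, $k$ $1$-handles, and $m$ $2$-handles, we have $\chi(S) = n - k + m = 2$, so $k = (n-1) + (m-1)$. This gives the right total count for the desired partition.

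Next I would form a multigraph $\Gamma$ with vertex set $\{U_1, \ldots, U_n\}$ and one edge for each band, where a band with both endpoints on the same component $U_i$ becomes a self-loop. The intermediate surface obtained by attaching the $k$ bands as $1$-handles to the $n$ disks bounding $\mathcal U$ is connected if and only if $\Gamma$ is connected; since this intermediate surface must be a connected planar surface with $m$ boundary components (so that capping with $m$ disks yields the $2$-sphere $S$), $\Gamma$ is connected. Therefore $\Gamma$ contains a spanning tree using $n-1$ of its $k$ edges. I would designate the corresponding $n-1$ bands as candidate fusion bands and the remaining $m-1$ as candidate fission bands.

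To achieve condition (1), I would convert this spanning tree into a path. Band slides act on $\Gamma$ as standard graph-theoretic edge-slides: sliding an edge $e$ along an adjacent edge $f$ moves the shared endpoint of $e$ to the other endpoint of $f$. Any tree can be reduced to a Hamiltonian path by a sequence of such moves (for instance, repeatedly slide a pendant edge attached at an interior vertex of degree at least three until only two leaves remain). Relabelling the $U_i$'s along the resulting path then yields a fusion band $\beta_i$ connecting $U_i$ to $U_{i+1}$. Condition (2) is obtained by invoking the same argument on the dual banded unlink diagram — which represents the same surface with the height function reversed — using the dual bands $\delta'_j$ and the components of $L_\beta$ in place of $\mathcal U$.

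The main obstacle is ensuring compatibility: band slides used to straighten the fusion spanning tree into a path should not disrupt the analogous structure being built on the dual (fission) side. I would handle this by restricting to slides of fusion bands over fusion bands on the $\mathcal U$ side and slides of dual fission bands over dual fission bands on the $L_\beta$ side. Each such slide modifies only the sliding band and, crucially, preserves the set-wise partition of bands into fusion and fission types, since a fusion band slid over another fusion band remains a fusion band (its two endpoints remain on distinct components of the current multigraph on $\{U_i\}$), and analogously on the dual side. A careful verification that these two families of slides can be performed independently — or, if they interact, iterated alternately to convergence — would complete the argument.
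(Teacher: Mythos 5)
The paper does not actually prove Lemma~\ref{fisfus}; it attributes this ``normal form'' to \cite{KSS82} as a consequence of elementary Morse theory, so there is no internal proof to compare against. Your graph-theoretic route (spine graph, spanning tree, edge-slides to a Hamiltonian path, then dualize) is a reasonable and genuinely different framing of the same fact, and the opening Euler characteristic count and the reduction ``tree $\to$ path via slides of pendant edges'' are both sound. However, as written the argument has two real gaps.

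First, you designate the $n-1$ spanning-tree bands on the $\mathcal U$-side as fusion bands and the complementary $m-1$ bands as fission bands, and then you propose to run the same spanning-tree argument on the dual diagram using precisely those $m-1$ dual bands $\delta'_j$. But you never check that this complementary set actually \emph{is} a spanning tree of the dual multigraph on $\{L_1,\dots,L_m\}$. This is the crux of the lemma and is not automatic from connectivity alone. It does hold, because the spine graph $\Gamma$ embeds in $S^2=F\cup(\text{$m$ disks bounding }L_\beta)$ with the $m$ faces corresponding exactly to the $L_j$'s and the dual bands crossing the primal ones, so $\Gamma'$ is the planar dual of $\Gamma$, and the complement of a spanning tree of a connected graph cellularly embedded in $S^2$ is a spanning tree of its dual. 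Without this observation (or an equivalent Morse-theoretic one: after resolving the tree bands one obtains a single circle, and orientability of $S$ forces every remaining band to be a split), the fission bands could \emph{a priori} fail to connect all of $L_\beta$, and the dual-side construction would collapse.

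Second, the non-interference step is only asserted, and your own hedge (``iterated alternately to convergence'') is an admission that you have not closed it. The instinct is right: a primal band slide of one tree band over another tree band modifies only that band's cocore, hence only its own dual edge, and leaves every $\delta'_j$ untouched; symmetrically, a dual-side slide of $\delta'_i$ over $\delta'_j$ leaves every $\beta_k$ untouched. Spelling this out (it is a short handle-slide argument on the planar surface $F$, fixing $\partial F$) would turn the hand-wave into a proof that one may first straighten the fusion tree to a path and then, independently, straighten the dual fission tree, with no iteration needed. Related to this, you should also address whether a band slide performed on the dual diagram is a ``band slide (as in Definition~\ref{def:moves})'' of the primal diagram, since the lemma is stated for band slides specifically and not the full Yoshikawa move set; if it is in fact a band swim in the primal picture, your proof establishes a version of the lemma with a slightly larger move set, which should be flagged.
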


An important banded unlink diagram for our purposes is the simplest one. For simplicity (and for our purposes later in this paper) we will distinguish between isotopy of surfaces and isotopy of banded unlink diagrams; in the following definition, we do \emph{not} allow moves as in Figure \ref{fig:moves}.

\begin{definition}
\label{def:trivialbud}
A banded unlink diagram is called \bit{trivial} if it is isotopic to one of the form illustrated in Figure \ref{fig:trivialbud}. A trivial diagram corresponds to a slice disk for the unknot in the $3$-sphere, obtained by pushing its spanning disk into the $4$-ball. 
\end{definition}

\begin{figure}[ht]
\includegraphics[height=70pt]{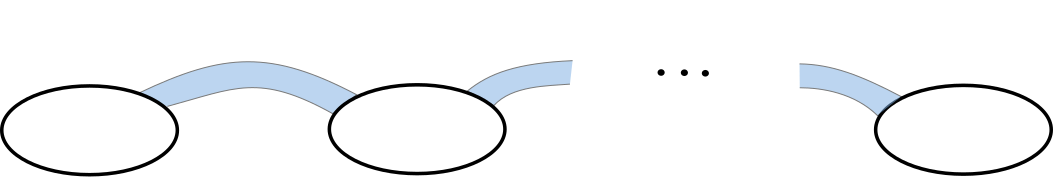}
\caption{A trivial banded unlink diagram, as in Definition \ref{def:trivialbud}. If the unlink $\mathcal U$ (black) has $n$ components, then there are exactly $n-1$ bands (blue), all of which are fusion bands. The link $L_\beta$ obtained by resolving the bands is simply the unknot.}
\label{fig:trivialbud}
\end{figure}

\subsection{Handle decompositions for Gluck twists}

In this section, we adapt the well-known ``rising water principle'' to obtain handle decompositions of the complement of a closed surface embedded in $S^4$ (presented as a banded unlink diagram). 

\begin{lemma}\label{def:stdhandledecomp} 
Let $\mathcal D$ be a closed banded unlink diagram for a closed connected surface $S_\mathcal{D}$. The complement $S^4-N(S_\mathcal{D})$ has a \bit{standard handle decomposition} corresponding to $\mathcal D$ built as follows:
\begin{enumerate}
    \item Attach $m$ $1$-handles $\mu_1, \dots, \mu_m$ to the $4$-ball for each component $L_i$ of $L_\beta$. Equivalently, remove regular neighbourhoods of a collection of disjointly embedded disks in $S^3$ bounded by the unlink $L_\beta$ (pushed into the $4$-ball). In ``dotted circle notation,'' this amounts to putting a dot on each component of the unlink $L_\beta$. 
    \item Attach $k$ $2$-handles $h_1, \dots, h_k$ with framing $0$ along unknotted curves ``dual" to each band $\beta_1,\dots \beta_k$, as in Figure \ref{fig:no3hs}.  
    \item Attach $3$-handles along $n$ disjointly embedded $2$-spheres $R_1, \dots, R_{n}$, corresponding to the disks bounded by the unlink $\mathcal{U}$, along with a 4-handle. Although 3-handles are not usually included in Kirby diagrams, their attaching regions can be drawn explicitly; see Remark \ref{rem:3handlespheres}. 
\end{enumerate}
\end{lemma}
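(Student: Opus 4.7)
The plan is to apply the rising water principle to the radial Morse function $h\colon S^4\to[-1,1]$ on $S^4$, with its unique critical points at the poles, after positioning $S_{\mathcal{D}}$ so that $h|_{S_{\mathcal{D}}}$ is Morse and self-indexing: the $n$ minima, $k$ saddles, and $m$ maxima occur at three distinct levels in the equatorial region. The banded unlink diagram $\mathcal{D}$ is then visible in the saddle-level 3-sphere $h^{-1}(0)$: the components of $\mathcal{U}$ are small pushoffs of the ascending circles of the minima, those of $L_\beta$ are pushoffs of the descending circles of the maxima, and the bands are the flat strips at the saddle level.

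Using $-h$ as a Morse function on $W = S^4\setminus N(S_\mathcal{D})$, I would build $W$ from the north pole downward by tracking how $W\cap h^{-1}([t,1])$ grows as $t$ decreases. Just below $h=1$ the region is a small 4-ball, yielding the 0-handle. The key input is a local model near each critical point of $h|_{S_\mathcal{D}}$: passing through a critical point of index $i$ on the complement side produces a handle of index $3-i$ on $W$. Hence each maximum of $S_\mathcal{D}$ contributes a 1-handle, with dotted attaching circle the corresponding component $L_i$ of $L_\beta$, since $L_i$ is exactly the descending circle of the maximum; each saddle contributes a 2-handle attached along a small meridian of the band $\beta_j$, with framing zero inherited from the flat embedding of $\beta_j$ in $h^{-1}(0)$; and each minimum contributes a 3-handle attached along the meridional 2-sphere $R_i$ surrounding the spanning disk of $U_i$ pushed into the lower 4-ball. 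Near the south pole the inner level set $\{h=t\}\cap W$ becomes a standard $S^3$, which is closed off by the final 4-handle; this inner $S^3$ is separate from the outer boundary $\partial W = \partial N(S_\mathcal{D})$, so after the 4-handle the outer boundary $S^2\times S^1$ remains, as expected.

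The main technical step is the local Morse-theoretic verification. In a product-disk neighborhood of $S_\mathcal{D}$, the height $h$ induces a Morse function on $\partial N(S_\mathcal{D})$ with two critical points per critical point of $h|_{S_\mathcal{D}}$, of indices $i$ and $i+1$, and one must argue that only the outward one (relative to $W$) contributes, giving the claimed handle of index $3-i$ on $W$. A useful sanity check is to compare with the dual handle decomposition obtained from $h$ itself (rather than $-h$), which would use $\mathcal{U}$ to dot $n$ 1-handles and $L_\beta$ to attach $m$ 3-handles; turning that decomposition upside down recovers the present one, and in particular identifies each $R_i$ with the cocore 2-sphere of the 1-handle dotted by $U_i$ in the dual picture. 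Once this bookkeeping is in place, the attaching data in $\mathcal{D}$ for every handle is immediate, giving the standard handle decomposition.
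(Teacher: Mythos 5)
Your proposal is correct and takes essentially the same route as the paper: the paper's (very short) proof likewise invokes the rising-water method of \cite[\S 6.2]{GS99} applied to the Morse function turned upside down, so that maxima of $S_\mathcal{D}$ give $1$-handles dotted along $L_\beta$, saddles give $0$-framed $2$-handles dual to the bands, and minima give the $3$-handles attached along the spheres $R_i$. You have simply unpacked the local Morse-theoretic bookkeeping (the index-$(3-i)$ rule for $-h$, the identification of descending circles of maxima with $L_\beta$, and the dual-decomposition sanity check) that the paper leaves implicit.
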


\begin{proof} In the usual ``rising water'' method of drawing complements of surfaces in $S^4$ (see, e.g. \cite[\S 6.2]{GS99}), a 1-handle is added for each minimum, a 2-handle for each saddle point, and a 3-handle for each maximum. The algorithm above implements this process for the Morse function on $S_\mathcal{D}$ induced by the banded unlink diagram $\mathcal D$ flipped ``upside down" (i.e. the roles of the unlinks $\mathcal U$  and $L_\beta$ are reversed). This gives a clearer picture of the attaching spheres for the 3-handles (see Remark \ref{rem:3handlespheres}), at the expense of more complicated-looking 1-handles. \end{proof}

\begin{definition}\label{fisfushandles}
Let $\mathcal D$ be a banded unlink diagram for a knotted surface. The $2$-handles $h_1, \dots, h_{n-1}$ of the standard handle decomposition for either the complement of $\mathcal S_\mathcal D$ or its Gluck twist will be referred to as \bit{fusion $2$-handles}, as they are attached \emph{around} the fusion bands $\beta_1, \dots, \beta_{n-1}$ from Definition \ref{fisfus} (or equivalently, \emph{along} their duals). Likewise, the remaining $2$-handles $h_n, \dots, h_k$ will be referred to as \bit{fission $2$-handles} since they are attached around the fission bands.
\end{definition}

When the surface $S:=S_\mathcal{D} \subset S^4$ given by a banded unlink diagram $\mathcal{D}$ is a 2-sphere, the standard handle decomposition of the complement $S^4 - N(S)$ naturally provides one for the Gluck twist $\Sigma_S$. In this case, attaching 3-handles corresponding to \emph{all but one} of the maxima give a Kirby diagram for $S^4-\interior(N(S_\mathcal{D}))$. Attaching a $+1$-framed 2-handle along a meridian to any one of the 1-handles gives a diagram for the punctured Gluck twist $\Sigma_S^\circ$. See \cite[\S6.2]{AK16} for more details.

\begin{definition}\label{def:stdhandle} 
Let $\mathcal D$ be a closed banded unlink diagram for a $2$-sphere $S\subset S^4$. The \bit{standard handle decomposition of the Gluck twist $\Sigma_S$} corresponding to $\mathcal D$ is built from the standard handlebody decomposition of the complement $S^4 - N(S)$ by attaching an additional +1-framed 2-handle $h_*$ along the meridian of $S$ considered in the boundary $\partial(S^4 - \interior N(S))$ followed by a single $4$-handle. We will refer to this additional 2-handle $h_*$ as the \bit{Gluck 2-handle}. For convenience, we will assume that its attaching circle is the meridian to the first $1$-handle $\mu_1$.  
\end{definition}

Figure \ref{fig:no3hs} illustrates an explicit example of the process of producing a standard handle decomposition of a Gluck twist, starting from the banded unlink diagram in Figure \ref{fig:foxbud}. 

\begin{figure}[ht]
\includegraphics[height=150pt]{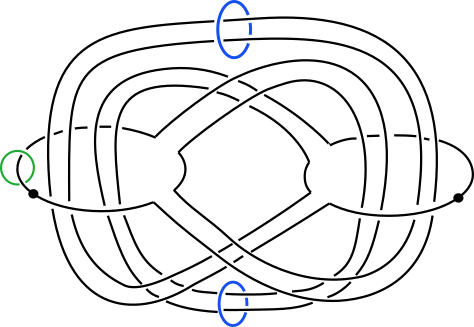}
\put(21,59){\small $\cup$ one $3$-handle}
\put(21,39){\small $\cup$ one $4$-handle}
\put(-233,70){\small $+1$}
\put(-110,154){\small $0$}
\put(-113,-10){\small $0$}
\caption{The ``standard handle decomposition" (as in Definition \ref{def:stdhandle}) for $\Sigma_{S_\mathcal{D}}$, where $\mathcal{D}$ is the diagram for the 2-twist spun trefoil in Figure \ref{fig:foxbud}. Alternatively, removing the +1 framed 2-handle and including an additional 3-handle gives a handle decomposition for $S^4-N(S_\mathcal{D})$.}
\label{fig:no3hs}
\end{figure}

\begin{figure}[ht]
\includegraphics[height=150pt]{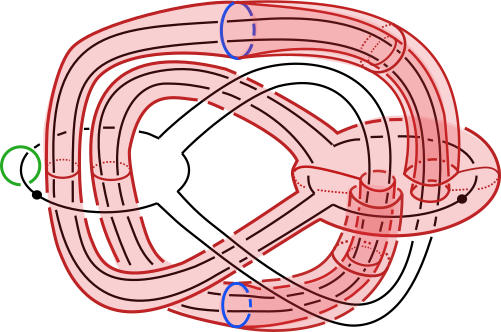}
\put(21,39){\small $\cup$ one $4$-handle}
\caption{The same standard handle decomposition in Figure \ref{fig:no3hs}, but with the attaching sphere for the 3-handle drawn in red. Note that it passes three times (once algebraically) over each of the blue 2-handles. As in Figure \ref{fig:no3hs}, all 2-handles are 0-framed except for the green ``Gluck 2-handle."}
\label{fig:yes3hs}
\end{figure}

\begin{remark}\label{rem:3handlespheres}
    Attaching spheres for 3-handles are usually not depicted in a handlebody diagram for a closed 4-manifold $X$, since they are attached uniquely up to diffeomorphism \cite{LauPoe72}. However, concrete attaching $2$-spheres can be determined in the following way. Begin by converting $\partial X^{(2)}$ (the boundary of the 0-, 1-, and 2-handles for $X$) to a standard picture for $\cs S^1 \times S^2$. Tracing the collection of spheres of the form $\{\pt\}\times S^2$ back to the handlebody diagram for $X^{(2)}$ gives the 3-handle attaching spheres $R_1,\dots,R_n\subset X^{(2)}$. 
    
    The attaching spheres can be explicitly constructed for these standard handle decompositions. Begin with the standard collection of spanning disks $D_1, \dots, D_n$ for the unlink $\mathcal U$. For each ribbon intersection of a band $\beta_i$ with the disk $D_j$, modify $D_j$ by replacing it with a new disk (labelled with the same name) obtained by deleting a neighbourhood of the ribbon intersection and replacing it with an annulus tubed along the band $\beta_i$ (choose a fixed direction to tube for each band) and capped off by a copy of the core of the $2$-handle $h_j$. This is illustrated in Figure \ref{fig:yes3hs}. For more details, the reader is encouraged to consult \cite[\S6.2]{GS99}.
\end{remark}

\subsection{Band passes and ribbon disks}

Unless otherwise indicated, we will only consider banded unlink diagrams up to isotopy of the bands rel boundary. In other words, we will consider such diagrams up to the \emph{ribbon presentations} they induce, rather than up to isotopy of the underlying surface. 

\begin{definition}
Let $\mathcal D$ be a banded unlink diagram. A \bit{band pass} of $\mathcal D$ is the local move illustrated in Figure \ref{fig:bandpass}, which produces a new banded unlink diagram related to $\mathcal D$ by passing a band through the unlink $\mathcal U$. 
\end{definition}

\begin{figure}[ht]
\includegraphics[height=100pt]{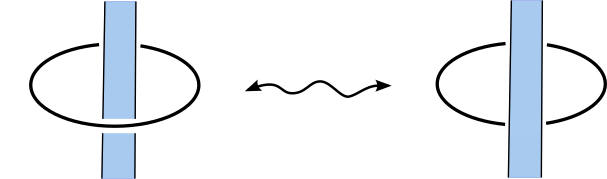}
\caption{A local picture of a band pass move on a banded unlink diagram, which passes a band through the unlink $\mathcal U$.}
\label{fig:bandpass}
\end{figure}

Band pass moves do \emph{not} preserve the isotopy class or even the boundary of the surface corresponding to a diagram, but they do preserve its genus. Consequently, we can define a new notion of ``complexity" for ribbon disks. 

\begin{definition}\label{def:bandunknottingnumber}
Let $D\subset B^4$ be a ribbon disk.  The \bit{undisking number $\delta(D)$} of $D$ is the least number of band pass moves required to convert a banded unlink diagram for $D$ to the trivial banded unlink diagram. 
\end{definition}

The undisking number of a ribbon disk $D$ gives an upper bound for its boundary's unknotting number $u(\partial D)$. In particular, we must have $2 u(\partial D) \leq \delta(D)$. Since band pass moves need not (and usually do not) take a closed banded unlink diagram to another closed banded unlink diagram, there is \emph{not} an analogous measure of complexity for closed surfaces. 

Many ribbon disks (especially well-known ones) for low-crossing knots have low undisking numbers. Of the ribbon disks given in \cite[F.5]{Kaw96book}, 15 of 21 have undisking number equal to one. The remaining ribbon disks (those bounded by the knots $10_3$, $10_{22}$, $10_{35}$, $10_{42}$, $10_{99}$, and $10_{123}$) likely have undisking number equal to two, a readily computed upper bound.  

\begin{figure}[htbp]
    \begin{minipage}{0.5\textwidth}
    \centering
    \includegraphics[width=0.7\textwidth]{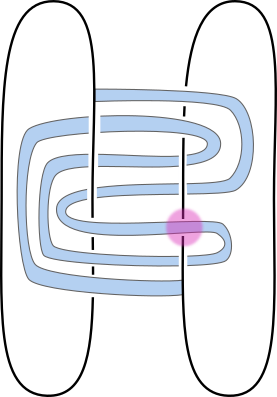}
    \end{minipage}
    \hfill
    \begin{minipage}{0.5\textwidth}
    \centering
    \includegraphics[width=.7\textwidth]{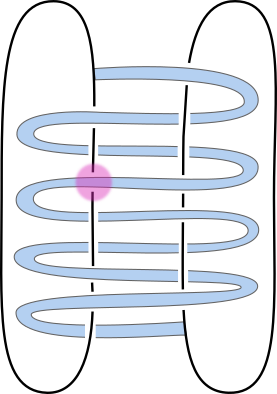}
    \end{minipage}

    \begin{minipage}[t]{0.5\textwidth}
    \caption{A ribbon disk for the knot $9_{41}$ with undisking number one.}
    \label{fig:kawauchidisks1}
    \end{minipage}
    \hfill
    \begin{minipage}[t]{0.5\textwidth}
    \caption{A ribbon disk for the knot $10_{75}$ with undisking number one.}
    \label{fig:kawauchidisks2}
    \end{minipage}
\end{figure}

\begin{lemma}\label{lem:kawauchidisks}
Of the ribbon disks given for ribbon knots in \cite[F.5]{Kaw96book}, the following have undisking number equal to one.
\[\{6_1,8_8,8_9,8_{20}, 9_{27}, 9_{41}, 9_{46}, 10_{48},10_{75}, 10_{87},10_{129},10_{137}, 10_{140},10_{153},10_{155}\}\]
\end{lemma}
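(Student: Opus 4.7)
The proof has two parts. The lower bound $\delta(D) \geq 1$ is immediate from Definition \ref{def:trivialbud}: a trivial banded unlink diagram corresponds to a slice disk for the unknot, and none of the knots in the stated list is trivial, so no ribbon disk bounded by them can have undisking number zero.

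For the upper bound $\delta(D) \leq 1$, my plan is to verify, for each of the fifteen knots, that there is a banded unlink diagram representing the given ribbon disk in which a single band pass move, followed by the isotopy-preserving moves of Definition \ref{def:moves}, produces the trivial banded unlink of Figure \ref{fig:trivialbud}. Figures \ref{fig:kawauchidisks1} and \ref{fig:kawauchidisks2} exhibit this for $9_{41}$ and $10_{75}$ and serve as the template: one marks a transverse intersection between a band and a component of the unlink $\mathcal U$, performs the band pass there, and then simplifies the resulting ribbon presentation via band slides, band swims, and cancellations/introductions to the trivial form. Since a single band pass effects at most two crossing changes on the boundary knot, and all fifteen knots on the list have unknotting number at most two, such a band pass exists in principle; the content of the argument is to realize it within the given ribbon presentation and to record the subsequent simplification.

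The task therefore reduces to producing, for each of the remaining knots $\{6_1, 8_8, 8_9, 8_{20}, 9_{27}, 9_{46}, 10_{48}, 10_{87}, 10_{129}, 10_{137}, 10_{140}, 10_{153}, 10_{155}\}$, a pair of diagrams analogous to Figures \ref{fig:kawauchidisks1} and \ref{fig:kawauchidisks2}: a ``before'' picture of the Kawauchi ribbon diagram with the chosen band pass marked, and an ``after'' picture displaying the trivial banded unlink. The main obstacle is practical rather than conceptual: for the denser ten-crossing diagrams---in particular $10_{140}$, $10_{153}$, and $10_{155}$---identifying the correct band pass and then writing out the sequence of isotopy-preserving moves that exhibits the reduction to trivial form is a tedious diagrammatic exercise, and there is no obvious uniform trick that handles all fifteen knots simultaneously. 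I would organize the fifteen outcomes into a single figure array to keep the proof verifiable, leaving the step-by-step reductions implicit but easily reconstructable from the marked before/after diagrams, and verify the nontrivial cases with software such as KLO as a sanity check.
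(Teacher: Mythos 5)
Your proposal is correct and takes essentially the same route as the paper's proof: both reduce the lemma to a case-by-case diagrammatic verification that each of Kawauchi's banded unlink diagrams admits a single band pass move taking it to a trivial diagram, exhibiting the move explicitly for $9_{41}$ and $10_{75}$ (Figures \ref{fig:kawauchidisks1} and \ref{fig:kawauchidisks2}) and relegating the remaining thirteen cases to analogous inspection. The only thing I would tighten is the remark that a band pass ``exists in principle'' because the unknotting number is at most $2$; this is not actually evidence, since a band pass realizes only a very constrained pair of crossing changes inside the fixed ribbon presentation and must trivialize the whole banded unlink diagram rather than merely unknot the boundary, so it is safest to drop it and rely, as the paper does, purely on the explicit diagram-by-diagram check.
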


\begin{proof}
This fact can be verified by checking that each banded unlink diagram illustrated in \cite[F.5]{Kaw96book} for each ribbon disk bounded by the knots specified above has (at least) one band pass move taking it to a trivial banded unlink diagram. Figures \ref{fig:kawauchidisks1} and \ref{fig:kawauchidisks2} illustrate two such banded unlink diagrams in Kawauchi's table for ribbon disks bounded by $9_{41}$ and $10_{75}$. A band move on the highlighted crossing in each diagram gives a trivial banded unlink diagram.
\end{proof}

One can also check that the remaining ribbon disks have undisking number at most two, by finding two band pass moves producing a trivial diagram. 

\begin{lemma}
The ribbon disks for the knots 
$$\{10_3, 10_{22}, 10_{35}, 10_{42}, 10_{99}, 10_{123}\}$$
given in \cite[F.5]{Kaw96book} have undisking number at most two. 
\end{lemma}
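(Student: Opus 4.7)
The plan is to mirror the proof of Lemma \ref{lem:kawauchidisks}, but now exhibiting \emph{two} band pass moves instead of one for each of the six remaining knots. Concretely, for each knot $K$ in the list, I will take the banded unlink diagram for its ribbon disk as drawn in \cite[F.5]{Kaw96book}, identify an ordered pair of crossings between a band and the underlying unlink $\mathcal U$ at which to perform band passes, and verify that the resulting diagram is trivial in the sense of Definition \ref{def:trivialbud}.

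The procedure I expect to follow in each case is: first, perform a single band pass that unlinks one of the bands from the rest of the diagram (typically by eliminating a linking between a band arc and a component of $\mathcal U$ that obstructs the band from being shrunk to a flat arc). After this first pass, the diagram should locally look like a trivial band joining two components of the unlink together with one remaining non-trivial piece. Second, perform a band pass on the remaining non-trivial band, which after isotopy of bands rel boundary (always allowed) should produce a diagram in which every band is a trivial fusion band and $\mathcal U$ is a planar unlink. This is precisely the trivial banded unlink diagram of Figure \ref{fig:trivialbud}. In some cases, it may be more efficient to pass two strands of the \emph{same} band rather than one strand each from two different bands; either variant counts as two band pass moves.

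The main obstacle is not conceptual but bookkeeping: each of the six diagrams in Kawauchi's table has a moderate amount of crossing information, and a band pass not only changes the local picture as in Figure \ref{fig:bandpass} but can force subsequent isotopies of the other bands rel boundary before a further band pass simplifies the diagram. I will need to choose the two crossings carefully so that after the first pass, the resulting band arcs can be isotoped into a position where the second band pass actually produces a trivial diagram (rather than just a simpler non-trivial one). In practice, this is done by identifying a crossing of a band with $\mathcal U$ that is part of a clasp or half-twist whose removal reduces the underlying link type of $\mathcal U \cup \beta$ to the trivial configuration. The verification is entirely mechanical once these crossings are chosen; since each diagram has only finitely many band/unlink crossings, an exhaustive check of plausible candidate pairs terminates quickly. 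Together with the inequality $2 u(\partial D) \leq \delta(D)$ and the fact that these knots all have unknotting number one, this shows $\delta(D) = 2$ in each case, though the lemma as stated only requires the upper bound.
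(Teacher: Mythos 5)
Your strategy for the upper bound $\delta(D)\le 2$ is the same as the paper's: for each of the six diagrams in Kawauchi's table, exhibit two band pass moves that convert it to a trivial banded unlink diagram. The paper in fact states this lemma without a proof environment, prefaced only by the sentence ``One can also check that the remaining ribbon disks have undisking number at most two, by finding two band pass moves producing a trivial diagram,'' so your plan is faithful to what is intended. It is, however, only a plan: neither you nor the paper actually pins down the six pairs of crossings, whereas for the $\delta=1$ cases the paper does display the specific band pass in Figures \ref{fig:kawauchidisks1} and \ref{fig:kawauchidisks2}. A complete proof would need the same level of concreteness here.

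Your final sentence is where the argument actually goes wrong. It is not true that all six knots have unknotting number one; for example $10_3$ and $10_{123}$ both have unknotting number two. Worse, the inequality $2\,u(\partial D)\le\delta(D)$ that you quote from the paper cannot be correct as printed: applied to the $6_1$ disk, for which $\delta=1$ by Lemma \ref{lem:kawauchidisks} while $u(6_1)=1$, it would read $2\le 1$; and applied to $10_3$ with $u=2$ it would give $\delta(D)\ge 4$, contradicting the very bound $\delta(D)\le 2$ you are proving. The correct relationship runs the other way---a single band pass changes the resolved boundary link $L_\beta$ by at most two crossing changes, so $u(\partial D)\le 2\,\delta(D)$---which, given $u\ge 1$, yields only $\delta(D)\ge 1$. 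This is consistent with the authors' own cautious phrasing that these six disks ``likely have undisking number equal to two''; they do not claim equality is established, and your deduction of $\delta(D)=2$ does not go through.
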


One easy way to produce ribbon disks is to ``spin'' a knot. If $K\subset S^3$ is a knot, let $D_K\subset B^4$ be the ribbon disk $(B^3\times I,K^\circ\times I)$ for the connected sum $K\#-K$. 

\begin{lemma}\label{lem:undiskingnumtwistknot}
Suppose that $T_n$ is the twist knot with $n$ half twists, i.e. the $n$-twisted Whitehead double of the unknot. Then $\delta(D_{T_n})=1$. 
\end{lemma}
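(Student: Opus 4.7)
The plan is to prove $\delta(D_{T_n}) \le 1$ by exhibiting an explicit banded unlink diagram for $D_{T_n}$ together with a single band pass converting it to a trivial diagram as in Definition \ref{def:trivialbud}. The reverse inequality $\delta(D_{T_n}) \ge 1$ is automatic, since $\partial D_{T_n} = T_n \# -T_n$ is nontrivial for $n \ne 0$, so no banded unlink diagram for $D_{T_n}$ can already be trivial.

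For the construction of the diagram, I would exploit the mirror symmetry built into the spun ribbon disk $D_{T_n} = T_n^\circ \times I \subset B^3 \times I$. Place $T_n$ in its standard twist-knot projection---two parallel strands with $n$ half-twists capped off by a clasp---and choose a Morse function on $D_{T_n}$ adapted to the reflection that swaps the two boundary copies of $B^3$. The resulting banded unlink diagram has its unlink $\mathcal U$ supported on mirror-paired arcs coming from a Morse decomposition of $T_n^\circ$, and its bands realize the fusions that assemble $T_n \# -T_n$. Because of the simple tangle structure of $T_n$, the diagram has exactly one ``nontrivial'' feature: a single band threaded through a strand of $\mathcal U$ at the location of the mirror-symmetric clasp crossings.

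Next, I would identify the band pass. Pulling this clasp band once through the relevant strand of $\mathcal U$ effects, via the built-in mirror symmetry, a simultaneous change of the clasp crossing on both the $T_n$ side and the $-T_n$ side. Since $u(T_n) = 1$ with unknotting crossing given by the clasp, the resulting diagram represents the spun disk $D_U$ of the unknot. A sequence of band slides, band swims, and cancellations from Definition \ref{def:moves}---none of which count toward the undisking number---then simplifies this to the trivial banded unlink diagram of Definition \ref{def:trivialbud}, proving $\delta(D_{T_n}) \le 1$.

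The main obstacle is drawing the diagram and the pass explicitly enough to verify that the single band pass really does undo the clasp on both mirror halves at once, and that the post-pass diagram simplifies to the trivial form using only the moves of Definition \ref{def:moves}. In practice this would be accomplished by an illustrative figure, in the spirit of the banded unlink diagrams displayed in Figures \ref{fig:kawauchidisks1} and \ref{fig:kawauchidisks2} and used in the proof of Lemma \ref{lem:kawauchidisks}.
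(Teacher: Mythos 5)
Your plan---exhibit an explicit banded unlink diagram for $D_{T_n}$ adapted to the spun/mirror structure and display a single band pass that trivializes it---is exactly the paper's approach; the paper's proof is a one-sentence pointer to Figure \ref{fig:twistdisks}, which shows such a diagram with an $n$-twist and a $(-n)$-twist region, the highlighted band pass, and the subsequent isotopies to the trivial diagram. One small caution if you actually draw the figure: a band pass changes one crossing of a band through one component of $\mathcal U$, so it will not literally undo a clasp ``simultaneously'' on both mirror halves; the paper's picture instead uses the single pass to free the band so that the $n$- and $(-n)$-twist regions cancel by isotopy, which is the picture you should aim for rather than two separate unknotting crossing changes.
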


\begin{proof}
A banded unlink diagram for the disk $D_{T_{n}}$ in which a single band pass trivializes the diagram is given in Figure \ref{fig:twistdisks} below. 
\end{proof}

\begin{figure}[ht]
\includegraphics[height=160pt]{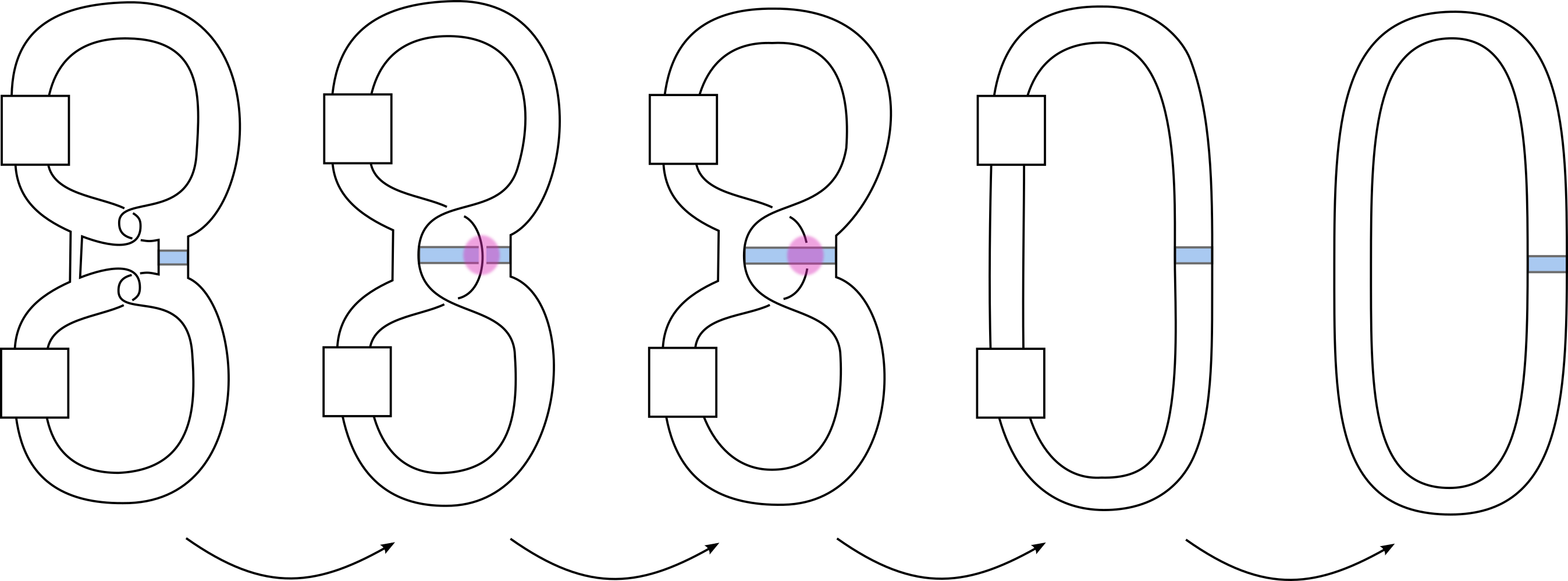}
\put(-425,122){\small $n$}
\put(-337,122){\small $n$}
\put(-247,122){\small $n$}
\put(-157,122){\small $n$}
\put(-430,52){\small $-n$}
\put(-341,52){\small $-n$}
\put(-251,52){\small $-n$}
\put(-161,52){\small $-n$}
\put(-367,-10){\small{Isotopy}}
\put(-284,-10){\small {Band pass}}
\put(-188,-10){\small{Isotopy}} 
\put(-91,-10){\small{Isotopy}}
\caption{A banded unlink diagram for $D_{T_{n}}$ (left), using the convention that the integer $n$ records the number of signed half twists between the vertical strands as they ascend through the boxed region. The highlighted band pass move takes this diagram for $D_{T_{n}}$ to the trivial banded unlink diagram on the right.}
\label{fig:twistdisks}
\end{figure}

\begin{definition} \label{def:triangle}
Let $\mathcal D$ be a banded unlink diagram for a ribbon disk, as in Definition \ref{def:bud}. As in Remark \ref{rem:3handlespheres}, let $D_1, \dots, D_n$ denote spanning disks for the components of the unlink $\mathcal U$. The bands $\beta_1, \dots, \beta_{n-1}$ of $\mathcal D$ are assumed to intersect these disks transversely in ribbon arcs. The diagram $\mathcal D$ is called \bit{triangular} if $n=3$ and
\begin{enumerate}
    \item only the band $\beta_1$ intersects the disk $D_3$,
    \item only the band $\beta_2$ intersects the disk $D_1$, and
    \item the band $\beta_2$ intersects $D_1$ exactly once.
\end{enumerate}
Either of the bands $\beta_1$ and $\beta_2$ is free to intersect the middle disk $D_2$. 
\end{definition}

\begin{lemma}\label{lem:uppertriangular}
Let $R\subset B^4$ be a ribbon disk. Then $\delta(R)=1$ if and only if $R$ corresponds to a triangular banded unlink diagram.
\end{lemma}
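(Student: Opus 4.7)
The plan is to prove the two directions of the biconditional separately.

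For the reverse direction, suppose $R$ corresponds to a triangular banded unlink diagram $\mathcal{D}$. The goal is to exhibit one band pass together with a sequence of moves from Figure~\ref{fig:moves} and ambient isotopies that bring $\mathcal{D}$ to a trivial diagram. First I would remove all ribbon intersections of $\beta_1$ and $\beta_2$ with the middle disk $D_2$ by ambient isotopy; these are inessential since both bands are attached to $\partial D_2 = U_2$. Next I would perform the single band pass at the unique ribbon intersection $\beta_2 \cap D_1$ guaranteed by condition~(3) of Definition~\ref{def:triangle}. Finally, I would slide $\beta_1$ over $\beta_2$ along their common attaching arc on $U_2$, transferring the $U_2$-end of $\beta_1$ onto $U_3$. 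Since $\beta_2 \cap D_3 = \emptyset$ by condition~(1), this slide does not alter the number of intersections of $\beta_1$ with $D_3$; but after the slide these intersections are all incident to the new attaching region of $\beta_1$ on $U_3$, so they are isotopically removable. Further moves from Figure~\ref{fig:moves} bring the resulting diagram to the standard trivial form of Definition~\ref{def:trivialbud}. This shows $\delta(R) \leq 1$. Because $\mathcal{D}$ has an essential ribbon intersection of $\beta_2$ with $D_1$ (and $\beta_2$ is not attached to $U_1 = \partial D_1$), the disk $R$ is not trivial, so $\delta(R) = 1$.

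For the forward direction, suppose $\delta(R) = 1$, and let $\mathcal{D}_R$ be a banded unlink diagram for $R$ with $n$ unknots such that a single band pass converts $\mathcal{D}_R$ to a trivial diagram $\mathcal{D}_0$. Reading this band pass in reverse, $\mathcal{D}_R$ is obtained from $\mathcal{D}_0$ by introducing exactly one ribbon intersection between some band $\beta_j$ and some disk $D_k$. I would first observe that necessarily $k \notin \{j, j+1\}$: otherwise the band pass could be realised as an ambient isotopy near the attaching region of $\beta_j$, contradicting $\delta(R) = 1 > 0$. In the case $n = 3$, the only essential options are $|\beta_1 \cap D_3| = 1$ or $|\beta_2 \cap D_1| = 1$; the second is already triangular, and the first becomes triangular after the order-reversing relabelling $(U_i, \beta_i) \mapsto (U_{4-i}, \beta_{3-i})$.

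If instead $n > 3$, I would reduce to $n = 3$ by iteratively cancelling extremal 0/1-handle pairs. Since $\mathcal{D}_R$ has exactly one essential ribbon intersection, every band $\beta_i$ with $i \ne j$ is plain and every disk $D_\ell$ with $\ell \ne k$ is unpierced, so one of the extremal pairs $(\beta_1, U_1)$ or $(\beta_{n-1}, U_n)$ is cancellable whenever $(j,k)$ is not itself extremal. When $(j,k)$ is extremal, I would first perform a band slide over an adjacent plain band to relocate the essential ribbon intersection into the interior of the chain before cancelling. Iterating reduces to $n = 3$, which is handled above. The main obstacle is this last reduction: one must track how the essential ribbon intersection is transported under band slides and cancellations, ensuring both that the diagram continues to carry a single essential intersection and that the intersection ends up in a position matching the triangular conditions of Definition~\ref{def:triangle}. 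A careful case analysis on the extremal position of $(j,k)$ completes the argument.
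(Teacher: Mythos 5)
Your forward direction rests on a claim that I believe is false and that, if removed, collapses the rest of the argument. You assert that the single band pass must involve a band \(\beta_j\) and a disk \(D_k\) with \(k\notin\{j,j+1\}\), reasoning that a pass through an adjacent disk could be undone by isotopy near the attaching region. This does not hold: the local band pass move can occur anywhere along \(\beta_j\), arbitrarily far from where \(\beta_j\) attaches to \(U_j\) or \(U_{j+1}\), and such passes generically create essential ribbon singularities. Indeed, any fusion-number-one ribbon disk (two minima, one band; e.g.\ a standard Stevedore disk) with \(\delta=1\) is precisely a case where the trivializing band pass involves one of the two disks spanned by the band's own endpoints — there is no third disk to pierce. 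The paper's proof embraces this: after swimming \(\beta_j\) to pass only through \(U_j\) and \(U_{j+1}\) and cancelling all other minima, the remaining band pass is deliberately a pass of \(\beta_j\) through \(U_j\) itself. To convert that two-minimum picture into a triangular one, the paper then \emph{introduces} a fresh pair \((U_*,\beta_*)\), performs a band swim of \(\beta_j\) through \(\beta_*\), and drags \(U_*\) along in an isotopy so that every crossing of \(\beta_j\) with \(U_{j+1}\) is absorbed onto \(\beta_*\). That introduction-and-swim step — going down to two minima and then back up to three — is the real content of the argument, and it has no analogue in your proposal.

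Even setting that aside, your reduction from \(n>3\) to \(n=3\) is announced rather than carried out: you yourself flag that tracking the single essential ribbon intersection through the chain of slides and \(0/1\)-cancellations is the "main obstacle," and that obstacle is not resolved. The paper avoids having to perform that bookkeeping by first swimming \(\beta_j\) so that it misses every disk except \(D_j\) and \(D_{j+1}\), at which point cancellation of the outer minima is automatic. Finally, in the reverse direction your last sentence claims that a single ribbon intersection of \(\beta_2\) with \(D_1\) forces \(R\) to be non-trivial; this is not so, since such a singularity can be removable, so what you have actually shown is \(\delta(R)\le 1\). (The paper's statement has the same imprecision but uses only the forward direction, so this is a minor point.) In sum, your attempt takes a genuinely different and more direct-looking route, but the central geometric step of the paper — introducing an auxiliary minimum to house the extraneous crossings — is missing, and without it the triangular form is not obtained.
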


\begin{proof}
If $R$ corresponds to a triangular banded unlink diagram, then it is easy to check that $\delta(R)=1$. Conversely, suppose that $\delta(R)=1$. Choose a banded unlink diagram $\mathcal D_0$ for $R$ realizing the fact that $\delta(R)=1$, i.e. such that one band pass move between a minimum $U_i$ and band $\beta_j$ takes $\mathcal D_0$ to a trivial diagram. We will modify $\mathcal{D}_0$ using the moves from Definition \ref{def:moves} to produce a (new) triangular banded unlink diagram. Recall that each move preserves the isotopy class of the corresponding disk $R$. Figures \ref{fig:trianglebud1} and  \ref{fig:trianglebud2} illustrate the steps detailed below. 

First, note that since performing a single band pass move of $\beta_j$ yields a trivial banded unlink diagram, the band $\beta_j$ must connect the $j^{th}$ and $(j+1)^{st}$ components of the unlink $\mathcal U$. The bands of $\mathcal D_0$ can therefore be isotoped rel boundary so that $\beta_j$ is the only band passing through any component of $\mathcal U$, as in the top diagram of Figure \ref{fig:trianglebud1}\footnote{Or, more rigorously, so that only the band $\beta_j$ intersects the standard spanning disks $D_1, \dots, D_n$ for the unlink $\mathcal U$ from Remark \ref{rem:3handlespheres}.}. 

Furthermore, we can achieve this without disturbing the crossing between $U_i$ and $\beta_j$ where the band pass move occurs, by isotoping all other bands rel boundary to ``standard" position and dragging $\beta_j$ along only when necessary. We can then perform band swims of $\beta_j$ over the other bands to arrange that $\beta_j$ passes only through $U_j$ and $U_{j+1}$, as shown in Step (1) of Figure \ref{fig:trianglebud1}. 

\begin{figure}[ht]
\includegraphics[height=300pt]{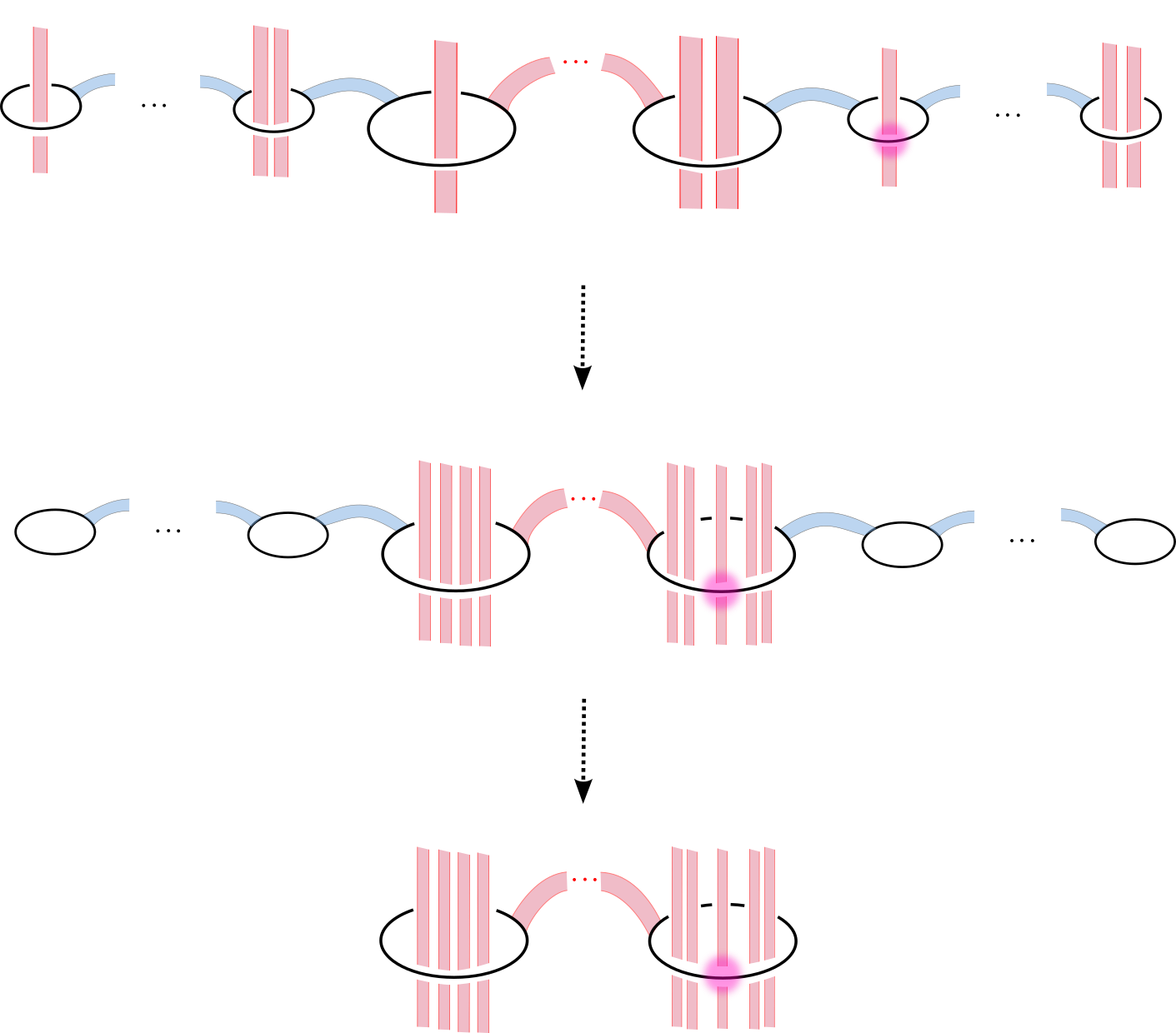}
\put(-165,201){\small $(1)$}
\put(-117,246){\small $U_j$}
\put(-194,246){\small $U_{j+1}$}
\put(-165,80){\small $(2)$}
\caption{The first two steps from the proof of Lemma \ref{lem:uppertriangular}, with the band $\beta_j$ in red. The band pass move that gives a trivial diagram is highlighted in each step. {\bf Step (1):} Band swim $\beta_j$ over other bands so that it only passes through $U_j$ and $U_{j+1}$. {\bf Step (2):} Cancel all but two components of $\mathcal U$.}
\label{fig:trianglebud1}
\end{figure}

The remaining components $U_k$ with $k \not \in \{j, j+1\}$ can now be cancelled with an adjacent band, as in Definition \ref{def:moves} and Figure \ref{fig:moves}. This produces the banded unlink diagram $\mathcal D_1$ both at the bottom of Figure \ref{fig:trianglebud1} and top of Figure \ref{fig:trianglebud2}. Note that we have now produced a ribbon embedding of $R$ with only two minima. By a slight abuse of notation, we continue to refer to the band as $\beta_j$ and the two components of the unlink as $U_j$ and $U_{j+1}$. Without loss of generality, we assume that performing a band pass between $\beta_j$ and the component $U_j$ produces a trivial banded unlink diagram from $\mathcal D_1$. 

Finally, introduce a new component $U_*$ to the unlink $\mathcal U$, connected to $U_j$ by a new band $\beta_*$. Perform a single band swim of $\beta_j$ through $\beta_*$, as in Step (3) of Figure \ref{fig:trianglebud2}, to produce a new banded unlink diagram $\mathcal D_2$ for $R$ with the property that a single band pass of $\beta_j$ through $U_*$ produces the trivial diagram. In particular, deleting $\beta_*$ and $U_*$ from $\mathcal{D}_2$ gives a trivial diagram. Therefore, $\beta_j$ can be isotoped rel boundary to look trivial with respect to $U_j$ and $U_{j+1}$, dragging along the unknot $U_*$ and the band $\beta_*$ in the process. This isotopy of the diagram replaces all of the crossings between $U_{j+1}$ and $\beta_j$ with crossings between $U_{j+1}$ and $\beta_*$. Thus, the conditions of Definition \ref{def:triangle} are satisfied, and we have produced a triangular banded unlink diagram for $R$. 

\begin{figure}[ht]
\includegraphics[height=300pt]{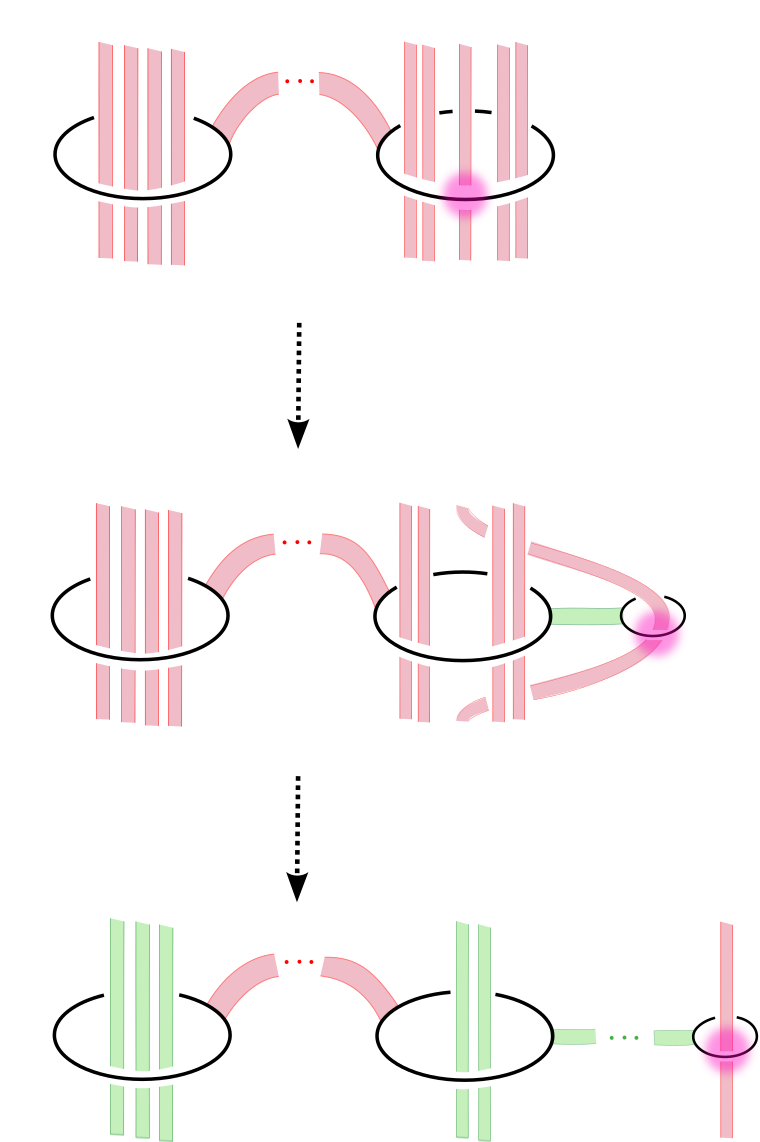}
\put(-117,199){\small $(3)$}
\put(-117,79){\small $(4)$}
\caption{The final two steps from the proof of Lemma \ref{lem:uppertriangular}, with the band $\beta_j$ in red and the band $\beta_*$ in green. The band pass move that gives a trivial diagram is highlighted in each step. {\bf Step (3):} Introduce $U_*$ and $\beta_*$ to the diagram. Band swim $\beta_j$ over $\beta_*$. {\bf Step (4):} Further isotopy of diagram, dragging $U_*$ along with $\beta_j$.}
\label{fig:trianglebud2}
\end{figure}
\end{proof}

\begin{remark}
The upcoming proof of Theorem \ref{thm:main} relies heavily on the fact that the ribbon disks we consider admit triangular banded unlink diagrams. If $\delta(R)>1$, it is not obvious whether $R$ admits a similar ``nice'' kind of banded unlink diagram (in the combinatorial sense) or if Theorem \ref{thm:main} can be extended to a larger family of spheres.
\end{remark}

\section{Five-dimensional handlebodies}\label{sec:5d}

\subsection{Products of Gluck twists}

Let $S$ be a $2$-sphere in $S^4$ given by a closed banded unlink diagram $\mathcal D$. Recall that $\mathcal{D}$ consists of a collection $\beta$ of $k$ bands attached to an $n$ component unlink $\mathcal U$, whose resolution is an $m$ component unlink $L_\beta$. The standard handlebody decomposition of the punctured Gluck twist $\Sigma_S^\circ$ induced by $\mathcal D$ consists of a 0-handle, $m$ $1$-handles, $k$ $2$-handles, and $n-1$ $3$-handles, as well an extra ``Gluck'' $2$-handle.

\begin{definition}\label{def:std5handlebody}
Let $D$ be a banded unlink diagram for a sphere $S\subset S^4$. The \bit{standard handle decomposition of the product $W_S=\Sigma_S^\circ \times I$} induced by the diagram $\mathcal D$ is the handle structure obtained by taking the product of the standard handle decomposition of $\Sigma_S^\circ$ with the interval $I$. This $5$-dimensional homotopy ball is built by attaching $m$ $1$-handles, $k$ $2$-handles, $n-1$ $3$-handles, and an extra ``Gluck $2$-handle" to the $5$-ball.
\end{definition}

\begin{remark} \label{rem:equiv}
The product $\Sigma_S^\circ \times I$ is diffeomorphic to $B^5$ if and only if $\Sigma_{S}\cup -\Sigma_S$ is diffeomorphic to the standard $4$-sphere. Indeed, if $\Sigma_S^\circ\times I=B^5$, then
\[S^4=\partial(B^5)=\partial(\Sigma_S^\circ\times I)=\Sigma_S\cup-\Sigma_S.\]
Conversely, if $\Sigma_S\cup-\Sigma_S\cong S^4$, then $\Sigma_S^\circ\times I$ is a homotopy 5-ball with boundary $S^4$, which is diffeomorphic to $B^5$ by \cite{Sma62}.   Moreover, we also have $\Sigma_S\cup -\Sigma_S=\Sigma_{S\# -S}$, so these conditions are equivalent to the statement that $\Sigma_{S\#-S}\cong S^4$. 
\end{remark}

In the next section, our goal will be to show that $\Sigma_S^\circ\times I\cong B^5$. We will simplify its handle structure by cancelling both 1/2- and 2/3- pairs. While the 1/2- pairs can be dealt with algebraically (Section \ref{subsec:5d1and2}), the 2/3 pairs must be manipulated to cancel geometrically (Section \ref{subsec:5d2and3}). We outline both cases separately in the following two subsections.

\subsection{Cancelling 5-dimensional 1/2-pairs} \label{subsec:5d1and2}

A handlebody structure on either a $4$- or $5$-manifold $X$ with a single $0$-handle induces a presentation of its fundamental group 
$$\pi_1(X) = \langle x_1, \dots, x_m ~|~ r_1, \dots, r_k \rangle,$$
with base point (suppressed in our notation) in the $0$-handle. Each generator $x_i$ is represented by a based loop passing once over the $i^{th}$ $1$-handle, while each relation $r_i$ is represented by a based loop running once around the attaching circle of the $i^{th}$ $2$-handle. Both the relations and generators are unique only up to conjugation and inversion, since they are determined by a choice of orientation and a ``whisker" from the attaching region of the 1- or 2-handle to the base point. 

\begin{definition}\label{def:ac}
Let $G$ be a group with presentation $\langle x_1, \dots, x_m ~|~ r_1, \dots, r_k \rangle$. The following algebraic manipulations are referred to as \bit{Andrews-Curtis (AC) moves} on $G$. 

\begin{enumerate}
    \item Replace a relation $r_i$ by a product $r_i r_j$ for any $i \not = j$.
    \item Replace a relation $r_i$ by its inverse.
    \item Replace a relation $r_i$ by its conjugate.
\end{enumerate}

A balanced presentation of the trivial group (with $m=k$) is called \bit{Andrews-Curtis trivializable} if it can be reduced to a \bit{trivial} presentation $\langle x_1, \dots, x_m ~|~ x_1, \dots, x_m \rangle$ through a finite sequence of AC moves. If $m<k$, we generalize this notion and refer to the presentation as \bit{Andrews-Curtis $m$-trivializable} if it can be reduced via a finite sequence of AC moves to an \bit{$m$-trivial} presentation with relations $r_i=x_i$ for $i=1, \dots, m$, and \bit{excess} relations $r_{m+1}, \dots, r_k$.     
\end{definition}

It is well-known that the AC-triviality of a presentation is preserved by replacing a generator by its product with another generator and modifying the relations accordingly (or more generally, replacing the generators by their image under any automorphism of the free group $F_m$). Whether all balanced presentations of the trivial group can be trivialized using AC moves is known as the \emph{Andrews-Curtis conjecture}. The (a priori weaker) stable version of this conjecture, which allows the addition or deletion of a generator $x$ and trivial relation $r=x$ is also open. For example, it is unknown whether the following presentations of the trivial group are stably Andrews-Curtis trivial for $n \geq 3$ (see Remark \ref{rem:gompfex}).
\[\langle x,y\mid xyx=yxy, x^{n+1}=y^{n}\rangle\] 

In our context, Andrews-Curtis moves will be used to keep track of the presentation of $\pi_1(X)$ induced by handle structures before and after handle slides. We will frequently use the fact that sliding the $i^{th}$ $2$-handle over the $j^{th}$ $2$-handle of $X$ replaces the relation $r_i$ by a product $r_i w_j$ where $w_j \in \pi_1(X)$ is some conjugate of $r_j$ that depends on the path used to perform the 2-handle slide. An example of a distinct pair of slides causing this indeterminacy is illustrated in Figure \ref{fig:slide}; for more details we refer the reader to \cite{Gom91}, \cite[\S 1]{MelSch21}, and \cite{CFHS96}.

\begin{figure}[ht]
\includegraphics[height=120pt]{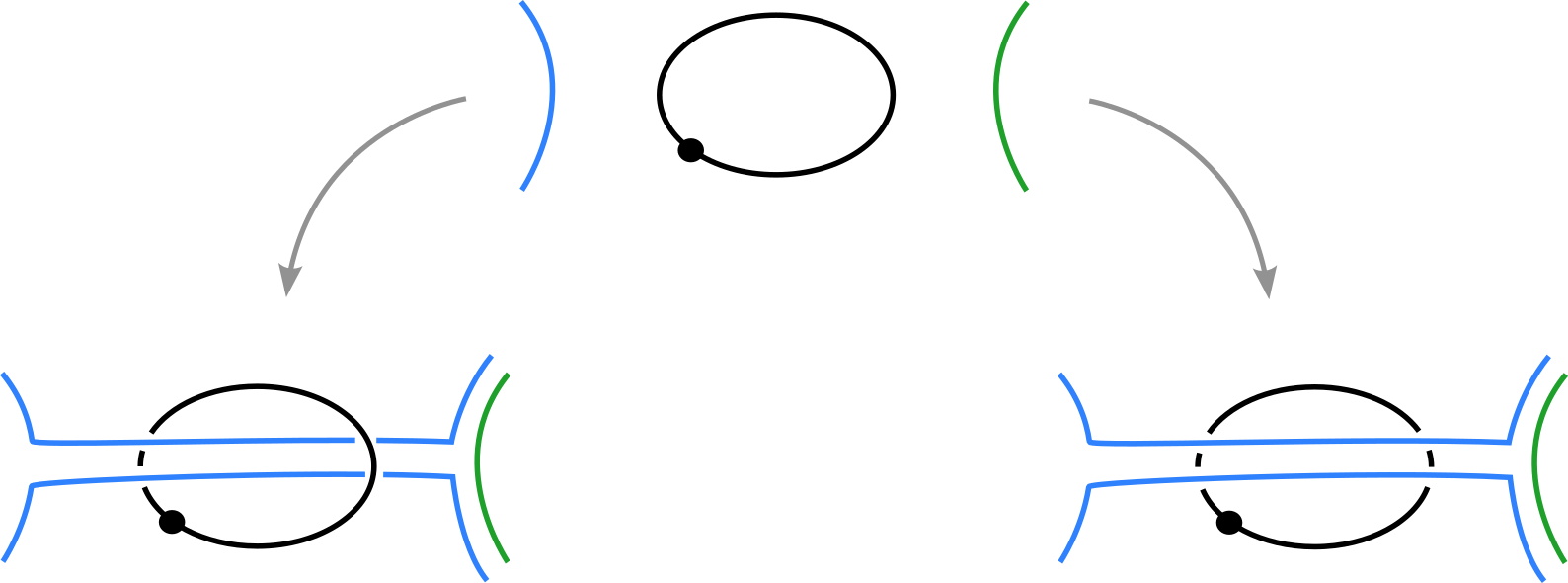}
\put(-300,84){\small Slide}
\put(-50,84){\small Slide}
\caption{Handle slides of one $2$-handle over another can be done along different paths in the $1$-handles. A choice of path determines the relation induced by the new $2$-handle, in the new induced presentation of the fundamental group.}
\label{fig:slide}
\end{figure}

We present the following well-known fact as a lemma. 

\begin{lemma}\label{lem:ACproduct}
Suppose that a $5$-manifold $W$ has a handlebody structure consisting of an equal number of $1$ and $2$-handles attached to the $5$-ball. If the induced presentation of $\pi_1(W)$ is AC trivial, then $W$ is diffeomorphic to the standard $5$-ball.
\end{lemma}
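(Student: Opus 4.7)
The plan is to realize the given AC trivialization of the induced presentation of $\pi_1(W)$ as an explicit sequence of handle moves on $W$, producing a new handle decomposition of the same $5$-manifold in which each $2$-handle geometrically cancels a $1$-handle. Once all such pairs cancel, $W$ is presented by just the $0$-handle, and so $W \cong B^5$.

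First, I would set up the standard dictionary between the three AC moves and handle-level operations. Replacing $r_i$ by $r_i r_j$ is realized by a slide of the $i$-th $2$-handle over the $j$-th $2$-handle; as in the discussion around Figure \ref{fig:slide}, such a slide yields a new relation of the form $r_i w_j$ where $w_j$ is a conjugate of $r_j$ determined by the sliding path through the $1$-skeleton, and by an appropriate choice of path we can realize $w_j = r_j$. Replacing $r_i$ by $r_i^{-1}$ is realized by reversing the orientation of the attaching circle of the $i$-th $2$-handle, and replacing $r_i$ by a conjugate $w r_i w^{-1}$ is realized by modifying the whisker connecting the base point to the attaching region, i.e., by an ambient isotopy of the attaching circle in the $4$-dimensional boundary of the $0$-$1$-handlebody. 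Performing these operations in the order prescribed by the given AC trivialization yields a new handle decomposition of $W$ whose induced presentation is literally $\langle x_1, \dots, x_m \mid x_1, \dots, x_m \rangle$.

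Next, I would geometrically cancel the pairs in turn. Since each relation is now $r_i = x_i$, the attaching circle $\gamma_i$ of the $i$-th $2$-handle, viewed in the boundary $\partial^+(\text{0-handle} \cup \text{1-handles}) \cong \#^m S^3 \times S^1$, is freely homotopic to the standard generating circle $c_i = \{\mathrm{pt}\} \times S^1$ of the $i$-th summand. The classical fact that freely homotopic embedded circles in a $4$-manifold are ambient isotopic (via Whitney moves on the $2$-dimensional trace of a homotopy) then lets us isotope $\gamma_i$ to $c_i$, and by general position this isotopy can be taken to be disjoint from the other attaching circles, since $1$-dimensional submanifolds have codimension $3$ in the $4$-dimensional boundary. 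Once $\gamma_i$ is in standard position, it meets the belt $3$-sphere of the $i$-th $1$-handle transversely in a single point, and the pair cancels. Inducting on the number of pairs, noting that each cancellation deletes one generator and its relation and so leaves the induced presentation trivial, we eliminate all $1$- and $2$-handles and are left with just the $0$-handle.

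The main obstacle is the step that upgrades algebraic to geometric cancellation: in general, algebraic cancellation of a $1/2$-pair in a $5$-manifold does not imply geometric cancellation, since Whitney moves in the $4$-dimensional boundary can fail. Here we exploit the very specific structure produced by the AC trivialization — the boundary $\#^m S^3 \times S^1$ together with the fact that each attaching circle represents a single free generator — which is enough to apply the classical isotopy result for embedded circles in $4$-manifolds and thereby move each $\gamma_i$ to standard position while keeping the other attaching circles fixed.
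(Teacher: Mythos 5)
Your proposal is correct and follows essentially the same route as the paper: realize the AC trivialization by $1$- and $2$-handle slides, then geometrically cancel each $1/2$-pair using the fact that null-homotopic circles in the $4$-manifold $\partial W^{(1)}$ can be isotoped to standard meridians because homotopy implies isotopy for circles in $4$-manifolds. (One small quibble: in codimension $3$ this is pure general position, not Whitney moves — but this does not affect the argument.)
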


\begin{proof}
Since the presentation of $\pi_1(W)$ induced by the given handlebody structure is AC trivial, the induced presentation can be arranged to be trivial (as in Definition \ref{def:ac}) by performing a sequence of $1$- and $2$-handle slides. 

Let $W^{(1)}$ denote the submanifold of $W$ consisting of only the $1$-handles of $W$ attached to the $5$-ball. Note that in general, the presentation of $\pi_1(W)$ prescribes the homotopy classes of the 2-handle attaching curves in $\partial W^{(1)}$. Because this presentation is trivial, each $2$-handle algebraically -- in fact, \emph{geometrically} -- cancels a $1$-handle, since homotopy implies isotopy for curves in $4$-manifolds. Cancelling all of the $1$- and $2$-handle pairs leaves $B^5$, as desired. 
\end{proof}

\begin{lemma} \label{lem:budac}
Let $\mathcal D$ be any closed banded unlink diagram for an embedded sphere $S \subset S^4$, with unlink $L_\beta$ consisting of $m$ components. The standard handlebody structure on the Gluck twist $\Sigma_S$ corresponding to $\mathcal D$ induces an $m$-trivial presentation of $\pi_1(\Sigma_S) = 1$.
\end{lemma}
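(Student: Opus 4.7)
The plan is to read off the induced presentation from the standard handle decomposition, reduce to a banded unlink in normal form via Lemma~\ref{fisfus}, identify the relations coming from the fission-band 2-handles, and then carry out a short Andrews-Curtis reduction.

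First, the standard handle decomposition of Definition~\ref{def:stdhandle} has $m$ one-handles $\mu_1,\dots,\mu_m$ dotting the components of $L_\beta$, the $k$ band two-handles $h_1,\dots,h_k$, and the Gluck two-handle $h_*$ (together with $n-1$ three-handles and a four-handle that do not affect the presentation). This induces a presentation of $\pi_1(\Sigma_S)=1$ on $m$ generators $x_1,\dots,x_m$ (meridians to the 1-handles) with $k+1=m+n-1$ relations $r_1,\dots,r_k,r_*$. Since $h_*$ is attached along a meridian to $\mu_1$, one has $r_*=x_1$ on the nose.

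Next, by Lemma~\ref{fisfus} I may assume after some band slides that $\mathcal D$ is in normal form, with $n-1$ fusion bands $\beta_1,\dots,\beta_{n-1}$ and $m-1$ fission bands $\delta_1,\dots,\delta_{m-1}$; relabel the band relations accordingly as $f_1,\dots,f_{n-1}$ (from fusion bands) and $s_1,\dots,s_{m-1}$ (from fission bands). Band slides correspond to 2-handle slides in the Kirby picture, which induce Andrews-Curtis moves on the induced presentation, so it suffices to prove $m$-triviality for the normal-form presentation. The geometric crux is now the form of each $s_i$: by condition~(2) of Lemma~\ref{fisfus} the dual $\delta_i'$ connects $L_i$ to $L_{i+1}$, so the long sides of $\delta_i$ itself are arcs lying on $L_i$ and $L_{i+1}$ (with opposite orientations inherited from the band). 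The attaching circle of the 2-handle dual to $\delta_i$, taken as an unknotted meridian to the band enclosing both long-side arcs in a transverse disk, reads off via a whisker as a word $w_i(x_i x_{i+1}^{-1})w_i^{-1}$ in $\pi_1\bigl(\#_m S^1\times S^2\bigr) = F\langle x_1,\dots,x_m\rangle$, where $w_i$ encodes the embedded path from the loop to the basepoint. A single AC conjugation move thus normalizes $s_i$ to $x_i x_{i+1}^{-1}$.

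Finally, I perform the AC reduction. Starting from the Gluck relation $r_*=x_1$ and the normalized $s_i=x_i x_{i+1}^{-1}$ for $i=1,\dots,m-1$, I proceed by induction on $i$: given $x_i$ already among the relations, I invert $s_i$ to obtain $x_{i+1}x_i^{-1}$ and then multiply on the right by the existing relation $x_i$ to get $x_{i+1}$. After $m-1$ iterations, the relations $r_*,s_1,\dots,s_{m-1}$ have become $x_1,x_2,\dots,x_m$, while the $n-1$ fusion-band relations $f_1,\dots,f_{n-1}$ remain unchanged and serve as the $m$-trivial excess. The main obstacle I anticipate is the verification in the previous paragraph that the 2-handle dual to a fission band really represents a conjugate of $x_i x_{i+1}^{-1}$ in $F_m$; this requires a careful tracking of the ``meridian to a band'' loop in the rising-water construction together with the orientation conventions for $L_\beta$, and is the only step that depends on the specific geometry rather than on purely algebraic AC manipulations.
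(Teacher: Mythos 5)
Your overall strategy matches the paper's: read off the induced presentation from the standard handle decomposition, observe that the Gluck 2-handle gives $r_*=x_1$, observe that each fission 2-handle links adjacent 1-handles $\mu_i$ and $\mu_{i+1}$, and run a short Andrews--Curtis reduction to build up $x_1,x_2,\dots,x_m$ in turn. (Your appeal to Lemma~\ref{fisfus} and the AC-invariance of band slides is unnecessary, since Definition~\ref{fisfushandles} already packages the standard handle decomposition with the fusion/fission labeling, but that is a harmless detour.)

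The genuine gap is in the step you yourself flag as the geometric crux: the claim that the fission relation $s_i$ is a conjugate $w_i(x_ix_{i+1}^{-1})w_i^{-1}$ of the word $x_ix_{i+1}^{-1}$. This is too strong. When you read off the element of $\pi_1\bigl(\#_m S^1\times S^2\bigr)=F_m$ represented by a small circle in a transverse slab piercing both long sides of $\delta_i$, the two small loops in that slab around $L_i\cap\Delta$ and $L_{i+1}\cap\Delta$ are conjugates $g_1 x_i g_1^{-1}$ and $g_2 x_{i+1}^{-1} g_2^{-1}$ of the standard meridians, and $g_1\neq g_2$ in general. So the honest form, after normalizing the whisker, is $s_i = x_i\,\omega_i x_{i+1}^{-1}\omega_i^{-1}$ for some $\omega_i\in F_m$, i.e.\ ``$x_i$ is conjugate to $x_{i+1}$,'' which is exactly what the paper asserts. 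Your stronger form would impose $x_i=x_{i+1}$ (not merely conjugacy), and adding all the fission relations would then abelianize the $x_j$'s to a single generator, forcing $\pi_1(S^4-S)$ to be cyclic; this is false already for the spun trefoil. The good news is that the AC reduction you outline still goes through verbatim with the corrected form: given $x_i$ among the relations, multiplying $s_i$ by the conjugate $\omega_i x_i^{-1}\omega_i^{-1}$ and inverting yields $x_{i+1}$. So the algebra is salvageable, but the stated geometric form of the fission relation, as written, is incorrect.
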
 

\begin{proof}
Recall that in the standard handlebody structure corresponding to $\mathcal D$ for the Gluck twist $\Sigma_S$, the Gluck 2-handle is attached along the meridian to the first $1$-handle $\mu_1$ (see Definition \ref{def:stdhandle}). Thus, the presentation of $\pi_1(\Sigma_S)$ that this handle decomposition induces includes $x_1$ as a relation. 

Meanwhile, each of the fission $2$-handles $h_{i+n-1}$ (for $1 \leq i \leq m$) gives a relation of the form $x_{i+1}=\omega_i x_i \omega_i^{-1}$ for some $\omega_i \in F_m$, since it is attached around the fission band $\beta_{i+n-1}$ that splits the components $\mu_i$ and $\mu_{i+1}$ of the unlink $L_\beta$. It is easy to check that the resulting presentation is AC $m$-trivial. 
\end{proof}

\begin{remark}
While it does guarantee that the 1- and 2-handles of $\Sigma_S^\circ\times I$ can be cancelled, Lemma \ref{lem:budac} does not immediately imply that $\Sigma_S^\circ\times I$ is diffeomorphic to the 5-ball, since our preferred handle decomposition of this 5-manifold generally has additional 2- and 3-handles. In the next section, we will try to cancel the higher index handles. In general, we will have to be careful, so as not to affect the AC-triviality of the 1- and 2-handles. 
\end{remark}

\subsection{Cancelling 5-dimensional 2/3-pairs}\label{subsec:5d2and3} 

\begin{definition}\label{def:ascdesspheres}
Let $L_S$ be the 4-dimensional level set of $W_S$ after the (5-dimensional) 2-handles are added but before the 3-handles are attached. There are two distinguished sets of disjointly embedded $2$-spheres in $L_S$, namely:
\begin{enumerate}
    \item The descending $2$-spheres $\mathcal R = \{R_1, \dots, R_{n-1}\}$ of the $n-1$ $3$-handles, and
    \item the ascending $2$-spheres $\mathcal G = \{G_1,\dots,G_k,G_*\}$ of the $k+1$ 2-handles, where $G_1,\dots,\allowbreak G_k$ are the ascending spheres of the first $k$ 2-handles, and $G_*$ is the ascending sphere of the Gluck 2-handle.
\end{enumerate}
\end{definition}

Note that the spheres in $\mathcal R$ and $\mathcal G$ may intersect (transversely). To cancel the 2- and 3-handles, we must understand these intersections.

\begin{definition}\label{def:geodual}
Suppose that $\mathcal{A}=\{A_1,\dots,A_k\}$ and $\mathcal{B}=\{B_1,\dots,B_k\}$ are two collections of embedded 2-spheres in a 4-manifold $X$ that intersect each other transversely. The sets $\mathcal{A}$ and $\mathcal{B}$ are called \bit{geometrically dual} if (up to relabelling), the intersection $A_i\pitchfork B_i$ is a single point, and $A_i \pitchfork B_j$ is empty for all $j \not =i$. The sets of spheres are called \bit{algebraically dual} if the algebraic intersection numbers satisfy $A_i \cdot B_i=\pm 1$ and $A_i \cdot B_j =0$ for all $j \not = i$.  
\end{definition}

\begin{lemma}
Suppose that a 5-manifold $W$ has a handlebody structure consisting only of an equal number of 2- and 3-handles attached to the 5-ball. Let $\mathcal{G}$ and $\mathcal{R}$ denote the collections of ascending and descending spheres for the 2- and 3-handles of $W$, respectively, viewed in a level set in after all 2-handles have been attached, but before any 3-handles have been attached. If $\mathcal{G}$ and $\mathcal{R}$ are geometrically dual, then $W$ is diffeomorphic to the standard 5-ball. 
\end{lemma}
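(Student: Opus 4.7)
The plan is to cancel the $k$ many 2/3-handle pairs one at a time, using the geometric duality hypothesis together with the classical 2/3-handle cancellation lemma, and to induct on $k$. When $k=0$, $W=B^5$ and there is nothing to prove.

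For the inductive step with $k\geq 1$, geometric duality gives (after relabelling if necessary) that $G_1\cap R_1$ is a single transverse point $p$ in $L$, while $G_j\cap R_1=G_1\cap R_j=\emptyset$ for all $j\neq 1$. Since $G_1$ is the ascending sphere of the 2-handle $h^2_1$ and $R_1$ is the descending sphere of the 3-handle $h^3_1$, the classical handle cancellation theorem produces a diffeomorphism from $W$ to a 5-manifold $W'$ built from $B^5$ by attaching only the remaining $k-1$ 2-handles followed by the remaining $k-1$ 3-handles.

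To close the induction I would verify that $\mathcal G' = \{G_2,\dots,G_k\}$ and $\mathcal R' = \{R_2,\dots,R_k\}$ remain geometrically dual in the new 4-dimensional level set $L'$ of $W'$. This follows from the fact that the handle cancellation can be realised by an ambient isotopy supported in a regular neighbourhood of $G_1\cup R_1$ in $L$; outside that neighbourhood, the level set is unchanged. By geometric duality, every sphere in $\mathcal G'\cup\mathcal R'$ is disjoint from this neighbourhood, so the spheres in $\mathcal G'$ and $\mathcal R'$ sit inside $L'$ exactly as they sat in $L$, with the same pairwise intersection pattern. Applying the inductive hypothesis to $W'$ then yields $W'\cong B^5$, and hence $W\cong B^5$.

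The step I expect to require the most care is the localisation argument in the third paragraph. Once one is convinced that cancelling the pair $(h^2_1, h^3_1)$ affects only a neighbourhood of the single intersection point $p$, both the existence of the cancelling diffeomorphism and the persistence of geometric duality for the remaining spheres are standard, and the induction runs through without further issue. The fact that we are working in dimension 5 (rather than 4) is what makes this clean, since we are not forced to upgrade any merely algebraic cancellation to a geometric one.
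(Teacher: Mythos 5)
Your proof is correct and is essentially the paper's argument with the details filled in: the paper's one-line proof simply asserts that geometrically dual 2/3-handle pairs cancel geometrically, and your induction together with the localization observation (that the level set changes only near $G_1\cup R_1$, which the remaining spheres avoid by duality) is the standard way to make that assertion precise. One cosmetic caveat: the passage from $L$ to $L'$ is a surgery on $L$ along $G_1$ supported in $N(G_1\cup R_1)$, not literally an ambient isotopy of $L$, but this does not affect the substance of the argument.
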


\begin{proof}
Since $\mathcal{G}$ and $\mathcal{R}$ are geometrically dual, the 2- and 3-handles partition into pairs that cancel geometrically. This leaves $B^5$, as desired. 
\end{proof}

On the other hand, any contractible 5-manifold $W$ built from $B^5$ using only 2- and 3-handles is not clearly diffeomorphic to $B^5$. Indeed, many $5$-dimensional $h$-cobordisms built using algebraically cancelling $2$ and $3$-handles are known not to be diffeomorphic to a product, following work of \cite{Don87}. However, given such a $5$-manifold, a variety of techniques exist to modify (and hopefully, \emph{simplify}) the intersection pattern between the ascending and descending spheres $\mathcal{G}$ and $\mathcal{R}$. Such a non-trivial intersection pattern is illustrated in Figure \ref{fig:4dlevelset}; we will address this example in \S \ref{sec:mainresults}.

\begin{figure}[ht]
\includegraphics[height=190pt]{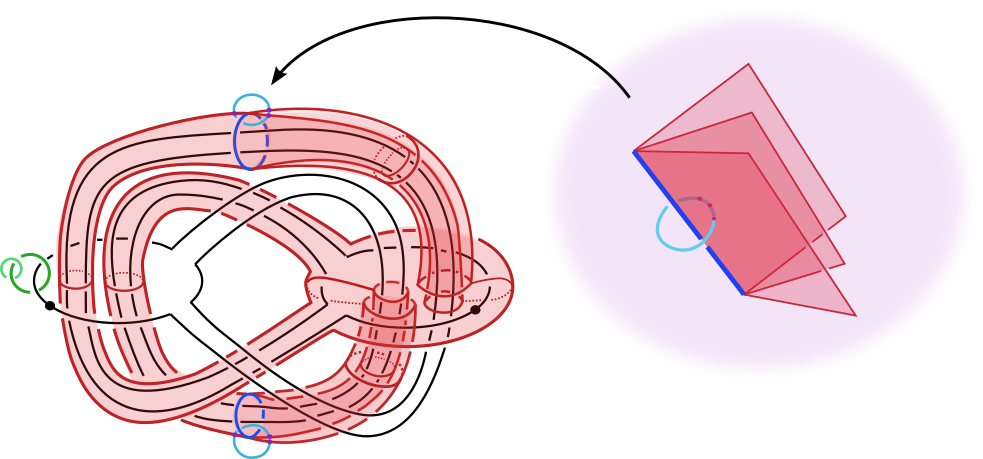}
\caption{An illustration of the level set $L_S$, where $S$ is the 2-twist spun trefoil from Figure \ref{fig:foxbud}. Note that the ascending spheres for the 2-handles (light blue) intersect the descending sphere for the 3-handle (red) in three points (but only once algebraically).}
\label{fig:4dlevelset}
\end{figure}

Towards this goal, suppose that $R$ and $G$ are smoothly embedded surfaces in a 4-manifold $X^4$ intersecting transversely in oppositely signed points $p$ and $q$, as in the left-hand side of Figure \ref{fig:wm}.

\begin{definition} \label{def:wm1}
Suppose $W \subset X^4$ is an immersed disk such that

\begin{enumerate}
    \item $G \pitchfork \partial W$ is an embedded arc $\gamma$ with endpoints $p$ and $q$,
    \item $R \pitchfork \partial W$ is an embedded arc $\rho$ with endpoints $p$ and $q$.
\end{enumerate}

\begin{figure}[ht]
\includegraphics[height=95pt]{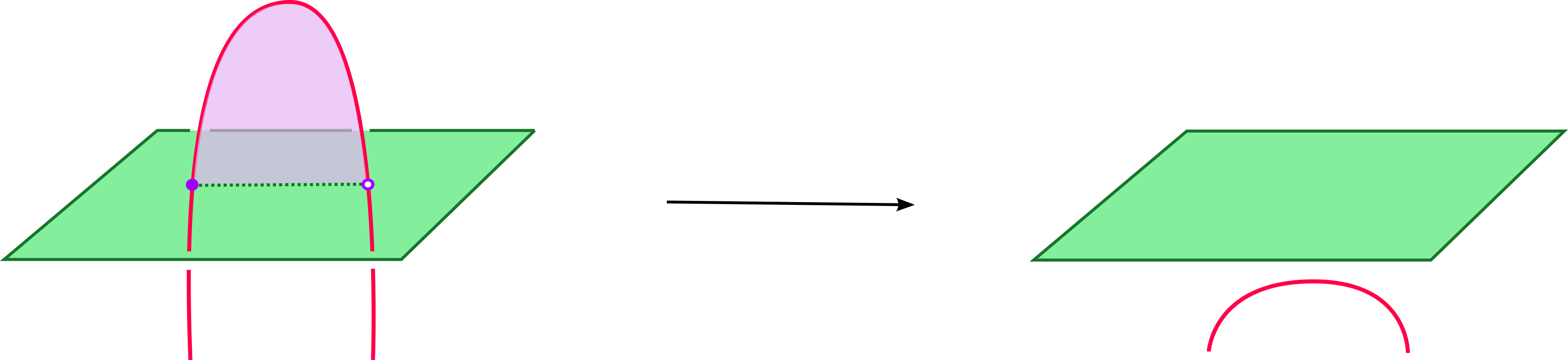}
\put(-345,68){\small $W$}
\put(-344,100){\small $\rho$}
\put(-345,37){\small $\gamma$}
\put(-311,-2){\small $R$}
\put(-410,15){\small $G$}
\put(-311,45){\small $p$}
\put(-373,45){\small $q$}
\put(-311,-2){\small $R$}
\put(-136,15){\small $G$}
\put(-37,-1){\small $R$}
\put(-230,28){\small Homotopy}
\caption{The local picture of a Whitney homotopy, as in Definition \ref{def:wm2}, removing pairs of intersection points between surfaces $R$ and $G$.}
\label{fig:wm}
\end{figure}

In this case, the disk $W$ is called a \bit{pre-Whitney disk} pairing the intersection points $p$ and $q$, and its boundary $\partial W=\rho\cup\gamma$) is called a \bit{Whitney circle}. 
\end{definition}

The normal $D^2$-bundle $\nu_W$ of a pre-Whitney disk $W$ has a unique trivialization up to isotopy.
Restricting this trivialization to $\nu_{\partial W}$ induces a framing on $\partial W$. The disk bundle $\nu_{\partial W}$ can also be identified with a smoothing of a trivial bundle $S^1 \times (I \times I)$ where each $I \times I$ fiber satisfies the following: 

\begin{enumerate}
    \item $I \times \pt$ is parallel to $G$ along $\gamma$, 
    \item $\pt \times I$ is parallel to $G$ along $\rho$,
    \item $\pt \times I$ is parallel to $R$ along $\gamma$, and 
    \item $I \times \pt$ is parallel to $R$ along $\rho$.
    
\end{enumerate}

This gives a second natural framing of $\partial W$, obtained from the immersion $R \pitchfork G$, referred to as the \bit{Whitney framing} of the Whitney circle. Refer to \cite{fqbook} for more details. 

\begin{definition} \label{def:wm2}
Let $W$ be an embedded pre-Whitney disk whose interior is disjoint from both $R$ and $G$. If the disk framing on the Whitney circle $\partial W$ induced by $W$ is isotopic to the Whitney framing induced by the surfaces $R$ and $G$, then $W$ is called a \bit{Whitney disk} pairing the intersection points $p$ and $q$. 
\end{definition}

If $W$ is an honest Whitney disk, it admits the local model shown on the left-hand side of Figure \ref{fig:wm}. This gives a regular homotopy of $R \cup G$ to the right-hand side of Figure \ref{fig:wm}, in which the pair of intersection points $p$ and $q$ have been removed. Whitney developed this local homotopy, now called a \bit{Whitney move} along $W$, to remove pairs of algebraically cancelling double points \cite{Whi44}.

\section{Main results}\label{sec:mainresults}

We are now ready to prove the main results of this paper. As a warm-up, we consider the case in which a 2-sphere $S\subset S^4$ admits an embedding with only \emph{one} maximum.\footnote{As a special case of the unknotting conjecture, spheres of this form were first conjectured to be smoothly unknotted by Suzuki \cite{Suz76}. This conjecture remains open, although it is now known by Freedman's work \cite{Fre82sc} that such spheres are topologically unknotted.} 

\begin{theorem}\label{thm:suzukispheres}
Let $S\subset S^4$ be an embedded 2-sphere with exactly one maximum with respect to the radial height function on $S^4$. Then $\Sigma_{S \cs -S}$ is diffeomorphic to $S^4$. 
\end{theorem}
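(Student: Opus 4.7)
By Remark \ref{rem:equiv}, it suffices to prove that $W := \Sigma_S^\circ \times I$ is diffeomorphic to $B^5$. The plan is to set up the standard 5-dimensional handle decomposition of $W$ from Definition \ref{def:std5handlebody} and then cancel all of its handles in two stages. I begin by choosing a closed banded unlink diagram $\mathcal{D}$ for $S$; since $S$ has a unique maximum, the resolution $L_\beta$ has a single component ($m = 1$), and by Lemma \ref{fisfus} all $k = n - 1$ bands are fusion bands. Thus $W$ is built from $B^5$ by attaching one 1-handle, $n - 1$ fusion 2-handles $h_1, \dots, h_{n-1}$, a Gluck 2-handle $h_*$, and $n - 1$ 3-handles.

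For the 1/2-cancellation, Lemma \ref{lem:budac} shows that the induced presentation of $\pi_1(W) = 1$ is AC $1$-trivial, with $h_*$ providing the trivializing relation $x_1$. After realizing the AC moves as slides of the fusion 2-handles over $h_*$, each fusion 2-handle's attaching circle represents the trivial word in the free group $F_1$. In the boundary $\partial W^{(1)} \cong S^1 \times S^3$ of the 1-handlebody, a null-homotopic curve is isotopic to a curve disjoint from the belt 3-sphere $\{\pt\} \times S^3$ (since the complement of the belt sphere is simply connected). The attaching spheres of the fusion 2-handles can therefore be isotoped off the belt sphere, after which the 1-handle cancels geometrically against $h_*$. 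This leaves $W$ described by $B^5$ together with the $n - 1$ fusion 2-handles and $n - 1$ 3-handles.

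For the 2/3-cancellation, let $L$ denote the 4-dimensional level set of $W$ once all remaining 2-handles have been attached, and consider the ascending spheres $\mathcal G = \{G_1, \dots, G_{n-1}\}$ of the fusion 2-handles and descending spheres $\mathcal R = \{R_1, \dots, R_{n-1}\}$ of the 3-handles inside $L$. Using the tubed-disk description of each $R_j$ from Remark \ref{rem:3handlespheres} and tracking how the handle slides of the previous step affect $\mathcal G$, I expect the algebraic intersection numbers to satisfy $G_i \cdot R_j = \pm\delta_{ij}$ after suitable reindexing. The main obstacle is then to promote this algebraic duality to a geometric one by finding embedded Whitney disks in $L$ (with the Whitney framing of Definition \ref{def:wm2}) pairing up each excess pair of oppositely signed geometric intersections. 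The single-maximum hypothesis is key here: the upper cap of $S$ is an unknotted disk, which should provide enough room in the upper half of $W$ to guide the construction of the required Whitney disks. Once the Whitney moves of Definition \ref{def:wm2} have been performed, $\mathcal G$ and $\mathcal R$ become geometrically dual, so the remaining 2- and 3-handles cancel in pairs and $W \cong B^5$, as desired.
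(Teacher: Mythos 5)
Your proposal diverges from the paper's proof at the crucial point of which banded unlink diagram to use, and this choice forces you into the genuinely hard problem of $2/3$-handle cancellation, which you do not solve. You take a diagram in which $L_\beta$ has a single component ($m=1$), which leaves $n-1$ three-handles in the standard handle decomposition of $W = \Sigma_S^\circ \times I$. Your subsequent $2/3$-cancellation is not a proof: you \emph{assert} that the intersection numbers come out to $\pm\delta_{ij}$ after the $1/2$-cancellation slides (``I expect\ldots''), without tracking how those slides alter the tubed $2$-spheres $R_j$ and $G_i$; and you appeal to the unknottedness of the upper cap of $S$ as providing ``enough room'' for embedded, correctly framed Whitney disks. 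Producing such Whitney disks is precisely the difficulty that the triangular structure in Lemma \ref{lem:uppertriangular} is engineered to overcome in the proof of Theorem \ref{thm:main}, where a pre-Whitney disk $\Delta$ is built explicitly and boundary-twisted into an honest Whitney disk. Nothing in the single-maximum hypothesis hands you those disks --- note that the paper's guiding example (the $2$-twist spun trefoil, Figure \ref{fig:4dlevelset}) has a single maximum and its $\mathcal G$/$\mathcal R$ intersection pattern is visibly \emph{not} geometrically dual.

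The paper sidesteps all of this. In the standard handle decomposition of Lemma \ref{def:stdhandledecomp}, the $3$-handles are indexed by $\mathcal U$ (whose $n$ components are the minima of $S$, i.e.\ the ``maxima'' of the upside-down Morse function used there), while the $1$-handles are indexed by $L_\beta$. So the hypothesis of one maximum should be processed by reversing the radial height function (equivalently, taking the dual diagram) so that $\mathcal U$ has a single component, i.e.\ $n=1$. Then the decomposition of $W$ has \emph{no} $3$-handles at all: it is a $0$-handle, $m$ $1$-handles, and $m = k+1$ $2$-handles. Lemma \ref{lem:budac} then gives an AC-trivial presentation (no excess relations since $n=1$), and Lemma \ref{lem:ACproduct} immediately yields $W\cong B^5$. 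This is exactly the easy ($1/2$) part of your argument, with the hard ($2/3$) part simply absent. (The paper's phrasing ``$L_\beta$ has only one component'' followed by ``only $1$- and $2$-handles'' is a small slip --- it is $\mathcal U$ whose component count $n$ governs the $3$-handles --- but the intended dualization is clear and is what you should have done.)
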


\begin{proof}
Since the given embedding of the sphere $S$ has only one maximum, it may be described by a closed banded unlink diagram $\mathcal D$ with the property that the link $L_\beta$ has only one component. Therefore, the standard handle decomposition of the Gluck twist $\Sigma_S$ with respect to the diagram $\mathcal D$ is built from the $4$-ball using only $1$ and $2$-handles, capped off by a single $4$-handle. 

By Lemma \ref{lem:budac}, the induced presentation of the trivial group $\pi_1(\Sigma_S^\circ \times I) = 1$ given by this handlebody structure is AC trivial. Thus, it follows from Lemma \ref{lem:ACproduct} that the product $\Sigma_S^\circ \times I$ is diffeomorphic to $B^5$, or equivalently by Remark \ref{rem:equiv}, that $\Sigma_{S \cs -S}$ is diffeomorphic to $S^4$. 
\end{proof}

\begin{remark}
In this case, $S\#-S$ is the double of a ribbon disk, and so is a ribbon 2-knot. Consequently, it also follows from Gluck's original work \cite{Glu62} that $\Sigma_{S\#-S}$ is standard. 
\end{remark}

We will now consider the possibility that $S\subset S^4$ has more than one maximum. In this case, the standard handlebody decomposition of $\Sigma_S^\circ\times I$ has 3-handles that must be cancelled. Provided that at least one ribbon hemisphere of $S$ is sufficiently simple, we obtain the same conclusion as the previous theorem.  

\begin{theorem}\label{thm:main}
Suppose that a 2-sphere $S$ admits a decomposition into two ribbon disks, one of which has undisking number equal to one. Then $\Sigma_S^\circ\times I$ is diffeomorphic to $B^5$.
\end{theorem}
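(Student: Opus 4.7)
The plan is to verify $\Sigma_S^\circ \times I \cong B^5$ by starting from the standard five-dimensional handle decomposition of Definition \ref{def:std5handlebody} and canceling handles in two stages: first the $1/2$-pairs algebraically, then the surviving $2/3$-pairs geometrically. By Lemma \ref{lem:uppertriangular}, I may assume that $S$ admits a closed banded unlink diagram $\mathcal{D}$ whose lower hemisphere is triangular, so $\mathcal{D}$ has $n=3$ minima and fusion bands $\beta_1, \beta_2$ satisfying the intersection conditions of Definition \ref{def:triangle}. The corresponding standard handle decomposition of $W_S = \Sigma_S^\circ \times I$ then consists of $m$ $1$-handles, two fusion $2$-handles $h_1, h_2$, some fission $2$-handles, the Gluck $2$-handle $h_*$, and exactly $n-1 = 2$ $3$-handles.

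Next I would use Lemma \ref{lem:budac} to cancel the $1$-handles. The relators from $h_*$ and the fission $2$-handles alone already AC $m$-trivialize the induced presentation of $\pi_1(W_S) = 1$, so the corresponding AC moves can be realized as $2$-handle slides performed among only these $m$ specific handles, leaving $h_1$ and $h_2$ untouched. As in the proof of Lemma \ref{lem:ACproduct}, the resulting $m$ meridional curves then cancel the $m$ $1$-handles geometrically, reducing $W_S$ to a handlebody built from the $0$-handle, the two fusion $2$-handles, and two $3$-handles, whose descending spheres I choose to be those constructed from the minimum disks $D_1$ and $D_3$ via Remark \ref{rem:3handlespheres}.

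Now the triangular hypothesis delivers a nearly ideal intersection pattern between the surviving descending spheres $R_1, R_3$ and ascending spheres $G_1, G_2$. Because $\beta_1$ is disjoint from $D_1$ and $\beta_2$ meets $D_1$ in exactly one ribbon arc, $R_1$ meets $G_2$ in a single transverse point and is disjoint from $G_1$, so the $2/3$-pair $(h_2, R_1)$ cancels directly. The remaining pair $(h_1, R_3)$ satisfies $R_3 \cap G_2 = \emptyset$ (since $\beta_2$ does not meet $D_3$) and $R_3 \cdot G_1 = \pm 1$ algebraically; the only remaining problem is that the geometric intersection $R_3 \cap G_1$ may contain excess canceling pairs of points, one pair for each extra ribbon arc of $\beta_1 \cap D_3$ beyond the first.

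The main obstacle is therefore this final step: clearing those excess intersections. Since $\pi_1(W_S) = 1$, each oppositely signed pair bounds an immersed Whitney disk in the $4$-dimensional level set, and the extra dimension afforded by $W_S$ allows such disks to be made embedded by transversality and reframed, if necessary by tubing into small linking spheres dual to $G_1$, to perform genuine Whitney moves. After those moves $R_3$ and $G_1$ intersect once geometrically and the last $2/3$-pair cancels, giving $W_S \cong B^5$. The delicacy of this step is authentic, since as noted in Section \ref{sec:5d} algebraically cancelling $2/3$-pairs need not cancel geometrically in dimension $5$; it is precisely the triangular structure arising from $\delta=1$ that restricts the nontriviality to a single intersection pattern amenable to Whitney-type resolution.
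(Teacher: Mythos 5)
Your high-level outline is right — start from the standard $5$-dimensional handle decomposition of Definition~\ref{def:std5handlebody}, use the triangular diagram from Lemma~\ref{lem:uppertriangular}, cancel $1/2$-pairs via AC-triviality and $2/3$-pairs geometrically — but the crucial last step is handled in a way that cannot work as written, and the ordering you choose destroys the tool the paper uses to save it.

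The gap is your treatment of the excess intersections $R_3 \cap G_1$. You write that ``the extra dimension afforded by $W_S$ allows such disks to be made embedded by transversality and reframed, if necessary by tubing into small linking spheres dual to $G_1$.'' Both halves of that sentence fail. First, the Whitney disks needed to cancel double points of $R_3$ and $G_1$ must live in the $4$-dimensional level set $L_S$, not in the ambient $5$-manifold: a Whitney move pushing $R_3$ off $G_1$ has to be an isotopy of $R_3$ \emph{in} $L_S$, so there is no ``extra dimension'' available for the disk itself. And in a $4$-manifold, producing an embedded Whitney disk with the correct framing is exactly the obstruction that prevents the smooth $h$-cobordism theorem from holding in dimension five — this is the whole difficulty, not a generic transversality step. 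Second, there is no ``small linking sphere dual to $G_1$'': $G_1$ is the belt sphere of a $0$-framed $2$-handle, so it has trivial normal bundle in $L_S$ and no meridian $2$-sphere intersecting it once; a push-off has zero intersection with $G_1$. You cannot tube into something that doesn't exist, and if you instead use boundary twists along $G_1$ you introduce new intersections of the corrected Whitney disk with $G_1$ that you have no means to remove.

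The paper's proof succeeds precisely because the Gluck $2$-handle $h_*$ is still in play when the Whitney disk is constructed: $\Delta$ is drawn passing once over the $+1$-framed Gluck handle, so its framing defect is exactly $+1$, and a single boundary twist along $G_2$ (followed by a carefully chosen $2$-handle slide and the Whitney move along the resulting honest Whitney disk) transfers the interference from $G_2$ onto $G_*$, after which further slides dualize $\{R_1,R_2\}$ and $\{G_1,G_2\}$. By cancelling all $m$ one-handles against $h_*$ and the fission handles \emph{before} addressing the $2/3$-pairs, you throw $G_*$ away first and lose the one sphere with a useful framing. A further, secondary issue: after that algebraic cancellation, it is not automatic that the surviving $R_1$, $R_3$ have the simple intersection pattern you read off from the triangular diagram, since the cancellations must push $R_1$ and $R_3$ off the belt spheres of the cancelled handles (which the fission bands do meet), and that push-off can change their intersections with $G_1$ and $G_2$. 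You would need to justify this, but it is moot given the Whitney-framing problem above.
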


\proof  Let $S=D_1\cup D_2$ be a decomposition of $S$ into two ribbon disks, where $\delta(D_1)=1$. By Lemma \ref{lem:uppertriangular}, there is a triangular banded unlink diagram for $D_1$ with fusion bands $b_1$ and $b_2$. Adding fission bands $f_1, \dots, f_\ell$ corresponding to the saddles of $D_2$, as shown in Figure \ref{fig:t1}, gives a schematic for a banded unlink diagram $\mathcal D$ for the sphere $S$. 

\begin{figure}[ht]
\includegraphics[height=185pt]{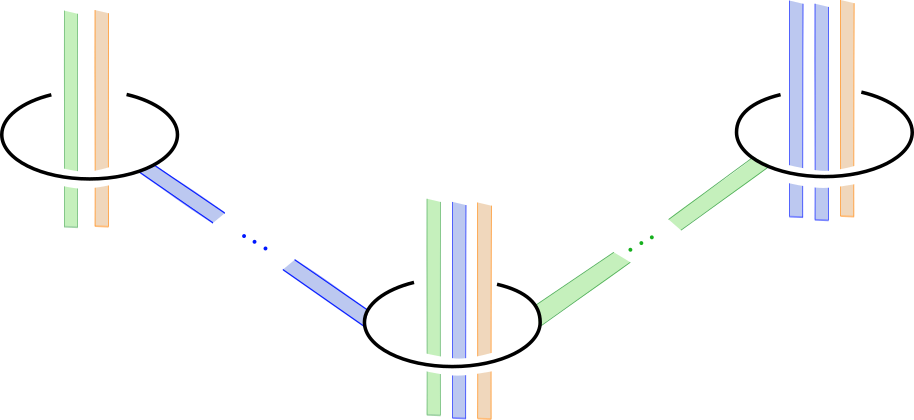}
\put(-315,105){\small $b_1$}
\put(-102,105){\small $b_2$}
\put(-318,135){\small $U_1$}
\put(-363,187){\small $f_i$}
\put(-240,18){\small $U_2$}
\put(-95,135){\small $U_3$}
\caption{An example of a banded unlink diagram $\mathcal D$ for a $2$-sphere $S$. The fusion bands $b_1$ and $b_2$ are drawn in blue and green (resp.), while the fission bands are illustrated in yellow (attaching regions omitted). By construction, performing a band pass move of $b_2$ through $U_1$ trivializes the banded unlink diagram for $D_1$.}
\label{fig:t1}
\end{figure}

The disk $D_1$ has three minima, and so the standard handle decomposition (see Definition \ref{def:std5handlebody}) of the $5$-manifold $W_S =\Sigma_S^\circ\times I$ has two 3-handles, with attaching spheres $R_1$ and $R_2$. Since the banded unlink diagram for $D_1$ is triangular, there are two 5-dimensional fusion 2-handles with ascending 2-spheres $G_1$ and $G_2$, together with the Gluck 2-handle, whose ascending sphere we label $G_*$. We choose to attach the Gluck $2$-handle along the meridian of the $1$-handle that achieves the set-up shown in Figures and \ref {fig:t2} and \ref{fig:t3}. We will label the ascending 2-spheres corresponding to the fission bands by $F_1,\dots,F_\ell$.

Figure \ref{fig:t2} illustrates the fact that in this special case, each pair $(R_1,G_1)$ and $(R_2,G_2)$ is already geometrically dual. However, in order to cancel \emph{both} $2/3$-handle pairs simultaneously, we will perform slides and isotopies of both sets of spheres to arrange for the set $\{R_1, R_2\}$ to be dual to the set $\{G_1, G_2\}$. We will argue that the remaining 1- and 2-handles cancel via an Andrews-Curtis argument. 

\begin{figure}[ht]
\includegraphics[height=200pt]{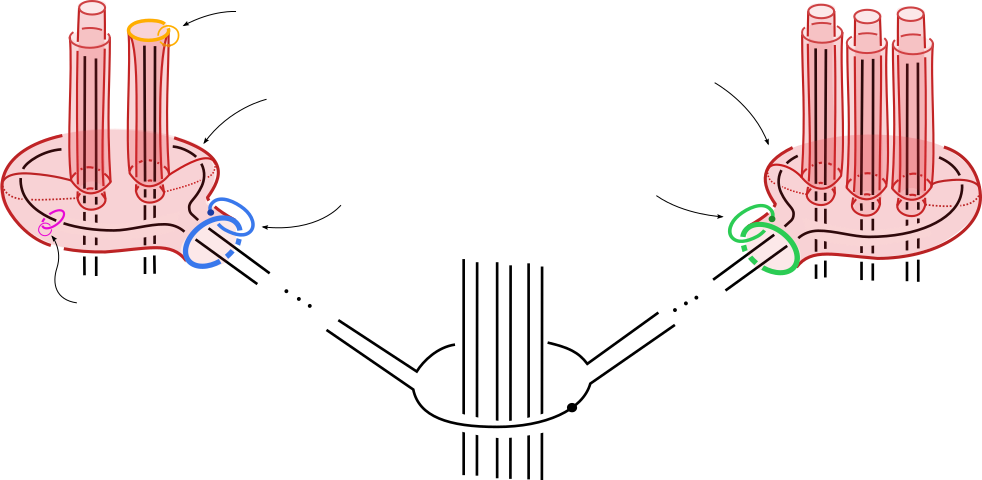}
\put(-372,70){\small $G_*$}
\put(-150,118){\small $G_2$}
\put(-265,115){\small $G_1$}
\put(-126,166){\small $R_2$}
\put(-307,193){\small $F_i$}
\put(-295,157)
{\small $R_1$}
\caption{A handlebody picture of $L_S$ corresponding to the schematic in Figure \ref{fig:t1}. This is a handlebody diagram for the double of the $4$-manifold $\Sigma_S^{(2)}$ (i.e. the handlebody $\Sigma_S$ without its 3- and 4-handles) together with various embedded 2-spheres.}
\label{fig:t2}
\end{figure}

We begin by finding an embedded pre-Whitney disk $\Delta$ (as in Definition \ref{def:wm1}) pairing the cancelling intersection points between $R_1$ and $G_2$, as shown in Figure \ref{fig:t3}. The framing of the Whitney circle $\partial \Delta$ induced by $\Delta$ differs from the Whitney framing by $+1$, since $\Delta$ can be drawn as in Figure \ref{fig:t3}, passing exactly once over the $+1$-framed Gluck $2$-handle. Note that by virtue of passing over this handle, the pre-Whitney disk $\Delta$ also intersects the ascending $2$-sphere $G_*$ transversely in a single point. 

By performing a ``boundary twist" along the Whitney arc $\partial \Delta \pitchfork G_2$ (as in \cite[\S 1.3-1.4]{fqbook}), we produce an honest Whitney disk $W$. As a result of this boundary twist, this new disk now has a single intersection with the sphere $G_2$. 

\begin{figure}[ht]
\includegraphics[height=220pt]{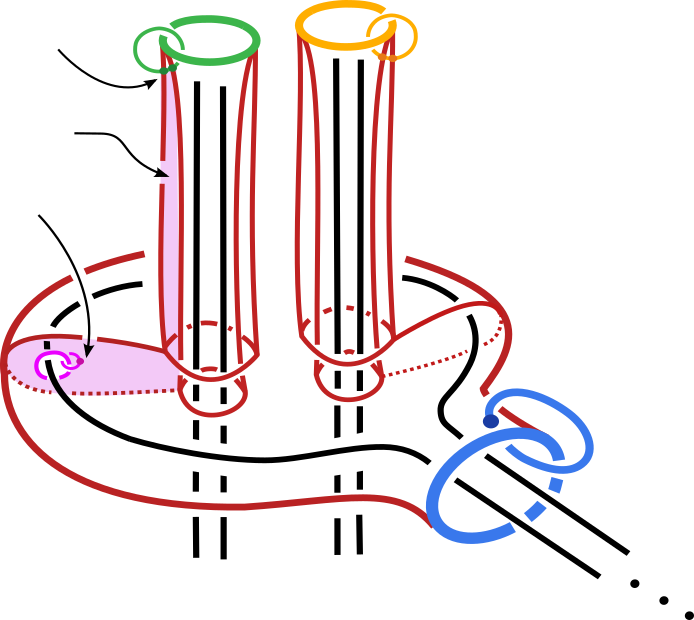}
\put(-277,203){\small in $R_1 \pitchfork G_2$}
\put(-304,215){\small Cancelling double points}
\put(-232,169){\small $\Delta$}
\put(-251,147){\small $\Delta \pitchfork G^*$}
\caption{The pre-Whitney disk $\Delta$ (shaded in pink) pairs cancelling intersection points in $R_1 \pitchfork G_2$. Note that $\Delta$ is not an honest Whitney disk, since the Whitney circle on the boundary of $\Delta$ has induced framing of $+1$ relative to the Whitney framing. Furthermore, the disk $\Delta$ intersects $G^*$ in one point, since $\Delta$ passes once over the Gluck $2$-handle.}
\label{fig:t3}
\end{figure}

We now give a sequence of handle slides that dualize $\{R_1,R_2\}$ and $\{G_1, G_2\}$. Each step is illustrated by the corresponding schematic in Figures \ref{fig:pf1} \ref{fig:pf2}, and \ref{fig:pf3} (the colors of the handles in the previous figures have been preserved). Since we will be sliding $2$-handles, we must preserve the AC $(\ell+1)$-triviality of the induced presentation.

Let $x_1, \dots, x_{\ell +1}$ be the generators corresponding to each $1$-handle of $W_S = \Sigma_S^\circ\times I$. In addition, let $\phi_1, \dots, \phi_\ell$ be the relations induced by the $2$-handles with the ascending spheres $F_1, \dots, F_\ell$ and $r_1, r_2, r_*$ be the relations induced by the $2$-handles with the ascending spheres $G_1$, $G_2$, and $G_*$. By Lemma \ref{lem:budac}, recall that the presentation $\mathcal P$ of $\pi_1(W_S) = 1$ induced by these generators and relations is AC $(\ell +1)$-trivial (as in Definition \ref{def:ac}), since the presentation $\langle x_1, \dots, x_{\ell + 1} \mid r_*, \phi_1, \dots, \phi_\ell \rangle$ is AC-trivial.

\vspace{2mm}

\textbf{Step 1.} Initially, the intersection pattern appears schematically as in Diagram (1) from Figure \ref{fig:pf1}. Both $R_1$ and $R_2$ have intersections with $G_1$, $G_2$, and $G_*$, as well as $F_1,\dots,F_\ell$. The intersections between $R_1$ and $G_2$ are currently paired by the Whitney disk $W$, which intersects both $G_*$ and $G_2$ in a single point. 

\vspace{2mm}

\textbf{Step 2.} Slide the Gluck $2$-handle $h_*$ over $h_2$. This has the geometric effect of tubing $G_2$ over $G_*$. The result of doing so is illustrated in Diagram (2) of Figure \ref{fig:pf1}; note that $W$ now only has a single intersection with $G_*$. 

Now, we must verify that the presentation of $\pi_1(W_S)$ induced by the $1$- and $2$-handles remains AC $(\ell +1)$-trivial. Re-label the $1$-handles if necessary so that \emph{before} the slide, $r_* = x_1$ and $r_2 = x_i \omega x_j^{-1} \omega^{-1}$, where $i,j \in \{1, \dots, \ell+1\}$ and $\omega \in \langle x_1, \dots, x_{\ell+1} \rangle$. \emph{After} the handleslide, the relation $r_*$ is replaced with $r_*r_2 = x_1 x_i \omega x_j^{-1} \omega^{-1}$. However, note that $x_i= x_1^{-1} \rho$ for some product $\rho$ of the relations $\phi_1, \dots, \phi_\ell$. Indeed, there is a sequence of fission $2$-handles running between any pair of $1$-handles in $W_S$.  Therefore, we may write $r_*r_2 =\rho \omega x_j^{-1} \omega^{-1}$. It now follows that the presentation $\langle x_1, \dots, x_{\ell +1} \mid r_*r_2, \phi_1, \dots, \phi_\ell \rangle$ is AC trivial, since there is a sequence of AC moves taking it to the presentation $\langle x_1, \dots, x_{\ell +1} \mid \omega x_j^{-1} \omega^{-1}, \phi_1, \dots, \phi_\ell \rangle$, which is AC trivial by the proof of Lemma \ref{lem:budac}.

\begin{figure}[ht]
\includegraphics[height=320pt]{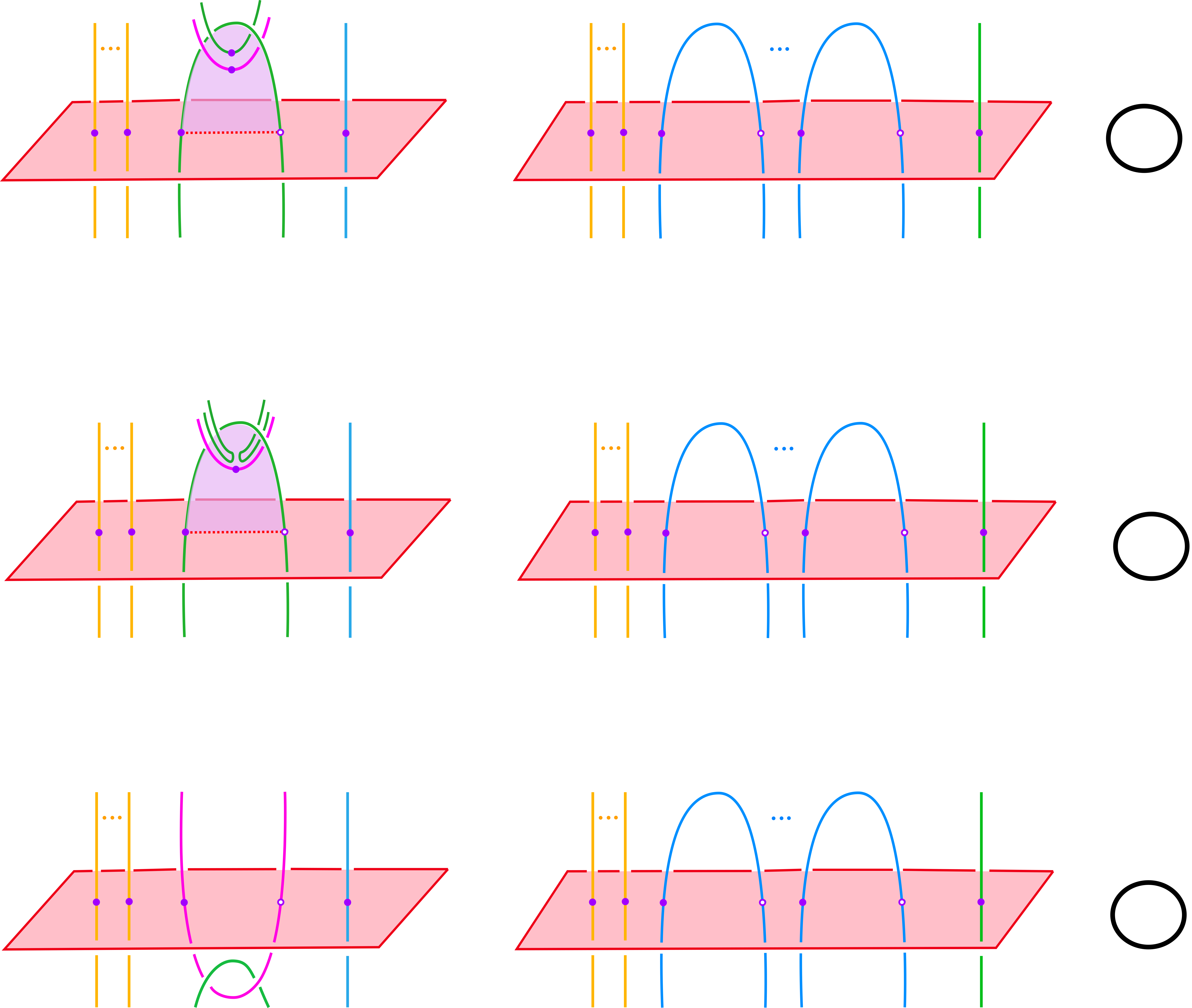}
\put(-18,273)
{\bf \large 1}
\put(-16,143)
{\bf \large 2}
\put(-17,26)
{\bf \large 3}
\caption{Steps $1, 2,$ and $3$ from the proof of Theorem \ref{thm:main}. Three-dimensional slices of the descending spheres $R_1$ and $R_2$ of the $3$-handles are illustrated in red, while the descending spheres $G_1$ and $G_2$  are colored blue and green, respectively. The ascending spheres $G_*$ and $F_1, \dots F_{\ell}$ are shown in pink and yellow. The Whitney disk $W$ (shaded in pink) is also illustrated in the first two steps.}
\label{fig:pf1}
\end{figure}

\vspace{2mm}

\textbf{Step 3.} Perform the Whitney move along the disk $W$. This has the effect of trading the two intersections of $R_1\cap G_2$ for two new intersections of $R_1\cap G_*$. Diagram (3) in Figure \ref{fig:pf1} illustrates the new schematic of intersections. This move can be  realized by an isotopy of the attaching spheres of the $3$-handles, and so does not effect the presentation of $\pi_1(W_S)$ induced by the $1$ and $2$-handles. 

\vspace{2mm}

\textbf{Steps 4 and 5.} Next, perform $3$-handle slides (which do not affect the homotopy classes of the 2-handle attaching maps) of $R_2$ over $R_1$ to remove all intersections of $R_2\cap G_1$. Diagram (4) of Figure \ref{fig:pf2} illustrates the effect on the attaching spheres, namely, tubing $R_2$ into multiple parallel copies of $R_1$. The new intersection pattern is illustrated in Diagram (5) of Figure \ref{fig:pf2}. In particular, $R_2$ now has many new intersections with $G_*$ and $F_1,\dots,F_\ell$, but is still geometrically dual to $G_2$. 

\begin{figure}[ht]
\includegraphics[height=260pt]{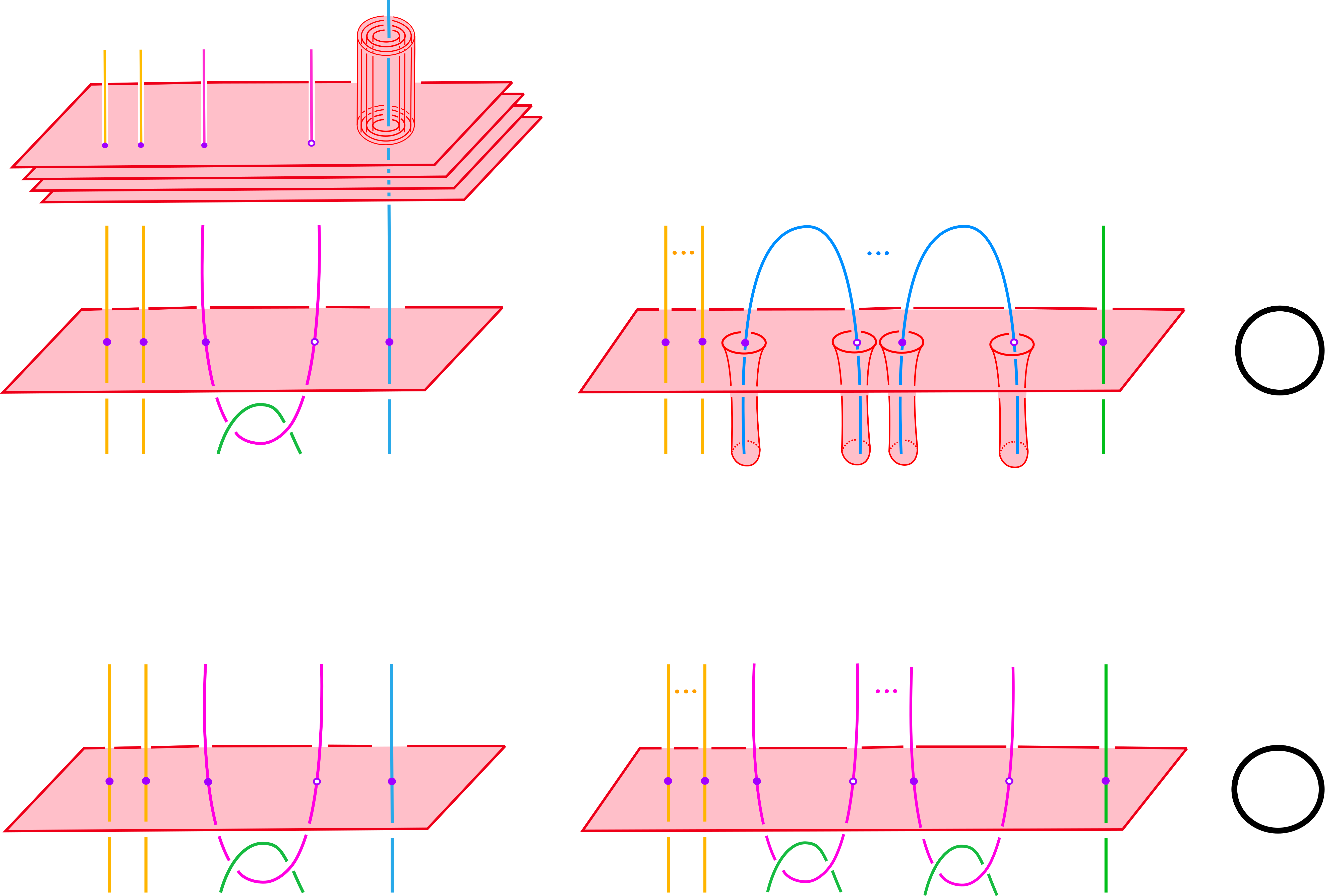}
\put(-18,155)
{\bf \large 4}
\put(-18,27)
{\bf \large 5}
\caption{Steps $4$ and $5$ from the proof of Theorem \ref{thm:main}.}
\label{fig:pf2}
\end{figure}

\vspace{2mm}

\textbf{Step 6.} As in Diagram (6) of Figure \ref{fig:pf3}, tube $G_*$ and $F_1,\dots,F_\ell$ over $G_1$ so that $R_1$ only intersects $G_1$. Note that since $R_2\cap G_1=\emptyset$, this does \emph{not} change any intersections of $R_2$. This corresponds to sliding $h_1$ over the other corresponding 2-handles; while this may change the relation corresponding to $h_1$, it does not affect the homotopy classes of the attaching regions for the remaining handles. 

\vspace{2mm}

\textbf{Step 7.} As in Diagram (7) of Figure \ref{fig:pf3}, tube $G_*$ and $F_1,\dots,F_\ell$ over $G_2$ so that $R_2$ has no extraneous intersections. As in Step 6, this corresponds to sliding $h_2$ over various 2-handles, and so does not affect the homotopy classes of attaching regions of the remaining handles.

\begin{figure}[ht]
\includegraphics[height=300pt]{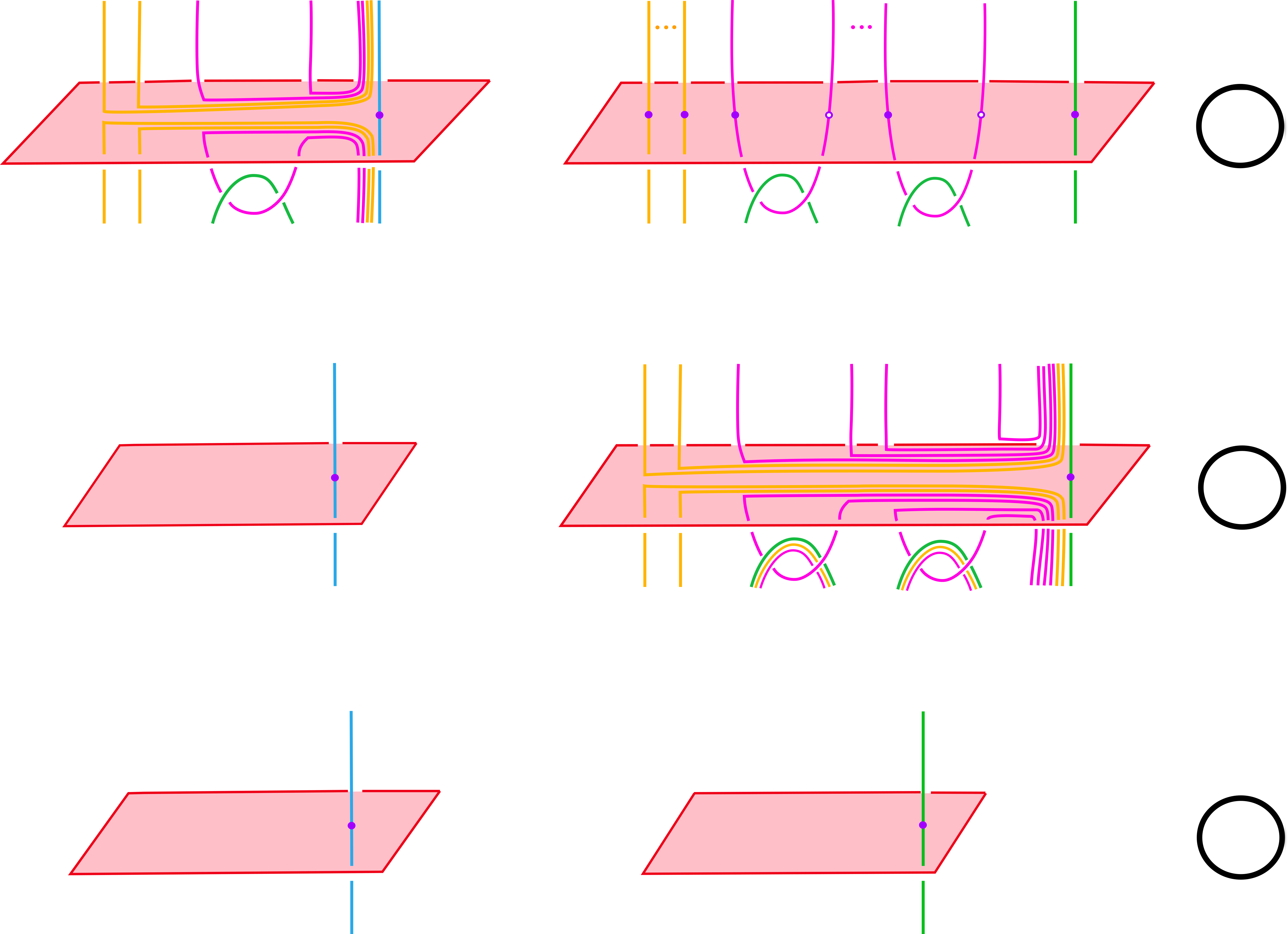}
\put(-19,256)
{\bf \large 6}
\put(-19,27)
{\bf \large 8}
\put(-18,139)
{\bf \large 7}
\caption{Steps $6, 7,$ and $8$ from the proof of Theorem \ref{thm:main}.}
\label{fig:pf3}
\end{figure}

\vspace{2mm}

\textbf{Step 8.} At this point, $\mathcal{R}$ and $\mathcal{G}$ are now dual (Figure \ref{fig:pf3} (8)), and can be geometrically cancelled. 

\vspace{2mm}

The remaining 1- and 2-handles induce an Andrews-Curtis trivial presentation of $\pi_1(W_S)$, and so by Lemma \ref{lem:ACproduct} we conclude that $\Sigma_S^\circ\times I\cong B^5$. \qed

\section{Applications and questions}\label{sec:examples}

We end the paper with some applications of Theorem \ref{thm:main}, along with some questions. 

There are ribbon presentations for each ribbon knot with less than 11 crossings given in \cite[F.5]{Kaw96book}; many of these specific disks have undisking number equal to one. By Lemma \ref{lem:kawauchidisks} we obtain the following corollary.

\begin{corollary}\label{cor:kawauchidisks} 
Suppose that a 2-sphere $S$ has one ribbon hemisphere given by a ribbon disk for one of the following knots in \cite[F.5]{Kaw96book}. Then $\Sigma^\circ_S\times I\cong B^5$.
\[\{6_1,8_8,8_9,8_{20}, 9_{27}, 9_{41}, 9_{46}, 10_{48},10_{75}, 10_{87},10_{129},10_{137}, 10_{140},10_{153},10_{155}\}\]
\end{corollary}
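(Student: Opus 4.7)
The plan is to observe that Corollary \ref{cor:kawauchidisks} is an immediate consequence of combining Theorem \ref{thm:main} with Lemma \ref{lem:kawauchidisks}, so the proof should amount to a short citation-style argument rather than any new geometric input. First I would let $S = D_1 \cup D_2$ be the given decomposition of $S$ into two ribbon disks, where $D_1$ is the hemisphere bounded by one of the fifteen knots listed in the statement. By Lemma \ref{lem:kawauchidisks}, the specified ribbon disk $D_1$ (as drawn in \cite[F.5]{Kaw96book}) has undisking number $\delta(D_1) = 1$.

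Next I would apply Theorem \ref{thm:main} directly: since $S$ admits a decomposition into two ribbon disks, one of which (namely $D_1$) has undisking number equal to one, the theorem concludes that $\Sigma_S^\circ \times I \cong B^5$. This is exactly the statement of the corollary.

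Since the work has already been done in establishing Lemma \ref{lem:kawauchidisks} (the explicit check of band pass moves in each of the fifteen Kawauchi diagrams) and in proving Theorem \ref{thm:main} (the detailed five-dimensional handle cancellation argument), there is no real obstacle here. The only modest care required is to note that the corollary statement is symmetric: it does not specify which hemisphere has low undisking number, only that one of them does, which matches the hypothesis of Theorem \ref{thm:main} verbatim. Thus the proof is essentially a one-line deduction, and I would present it as such.
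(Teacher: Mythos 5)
Your proposal is correct and matches the paper's argument exactly: the corollary is deduced by combining Lemma \ref{lem:kawauchidisks} (which verifies $\delta(D_1)=1$ for each listed Kawauchi disk) with Theorem \ref{thm:main}. No further comment is needed.
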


\begin{remark}\label{rem:gompfex} 
    Some of these knots have symmetries which produce different ribbon disks. In particular, Cappell and Shaneson's famous homotopy 4-sphere (now known to be $S^4$) is the Gluck twist on a sphere given as the union of two ribbon disks for $8_9$ which are related by an involution (see e.g. \cite{Kirby}). Theorem \ref{thm:main} offers an alternate proof that the double of this homotopy sphere is standard and may apply to homotopy spheres constructed similarly using the list above. 
    
    A simple example is the 2-twist spun trefoil, which serves as the guiding example of this paper. It can be presented as the union of two ribbon disks for $6_1$, both of which have undisking number one (see the footnote in Figure \ref{fig:foxbud}).  
\end{remark}

Theorem \ref{thm:main} also applies to some deformation spun knots. By Lemma \ref{lem:undiskingnumtwistknot}, we obtain the following. 

\begin{corollary}\label{cor:twistcor}
Suppose that $K$ is a twist knot, and let $D_K\subset B^4$ be the ribbon disk for $K\#-K$ obtained by spinning $K$ through the 4-ball. If a 2-sphere $S$ has one ribbon hemisphere given by $D_K$, then $\Sigma_S^\circ\times I\cong B^5$.
\end{corollary}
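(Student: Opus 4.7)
The plan is to derive this corollary as an immediate consequence of two earlier results in the paper. First I will unpack the hypothesis: by assumption $S$ decomposes as a union of two ribbon disks, one of which is the spun disk $D_K$ for a twist knot $K = T_n$. This places $S$ in the setting of Theorem~\ref{thm:main}, provided that one of the two hemispheres has undisking number equal to one. Lemma~\ref{lem:undiskingnumtwistknot} supplies exactly this: it asserts $\delta(D_{T_n}) = 1$, established via the explicit banded unlink diagram and single band-pass trivialization exhibited in Figure~\ref{fig:twistdisks}.

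With both hypotheses of Theorem~\ref{thm:main} now verified, I will apply that theorem to conclude $\Sigma_S^\circ \times I \cong B^5$, which is precisely the assertion of the corollary. Equivalently, by Remark~\ref{rem:equiv}, the Gluck twist $\Sigma_{S \cs -S}$ is diffeomorphic to the standard 4-sphere.

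There is no substantive obstacle here: all the geometric and algebraic work has already been done in Section~\ref{sec:mainresults} (the handle-cancellation argument, including the Whitney move on the pre-Whitney disk $\Delta$ and the Andrews-Curtis bookkeeping) and in Section~\ref{sec:definitions} (the construction of the triangular banded unlink diagram for any ribbon disk of undisking number one, via Lemma~\ref{lem:uppertriangular}). The only minor point one might flag is confirming that the spun disk $D_K = (B^3 \times I, K^\circ \times I)$ is genuinely a ribbon disk for $K \# -K$, but this is classical and already implicit in the definition given just before Lemma~\ref{lem:undiskingnumtwistknot}. Thus the corollary is obtained by assembling the two cited results, with no new arguments required.
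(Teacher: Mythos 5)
Your proposal is correct and matches the paper's own argument: the paper derives the corollary directly from Lemma~\ref{lem:undiskingnumtwistknot} (which gives $\delta(D_{T_n})=1$) combined with Theorem~\ref{thm:main}.
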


Twist knots have unknotting number one, so twist-roll spins of these knots are already covered by the main theorem in \cite{NaySch20}. However, because one of the two ribbon hemispheres may be arbitrary, this corollary is more general (at the cost of a weaker conclusion). 

Since all Gluck twists are conjectured to be standard, one might expect that $\Sigma_{S\#-S}$ is always standard. As a starting point, it may be fruitful to try to push the techniques of \S \ref{sec:mainresults} further. 

\begin{question}
    For what other classes of ribbon disks does an analogue of Theorem \ref{thm:main} hold? In particular, what if one ribbon hemisphere has fusion number equal to one (but arbitrary undisking number)?
\end{question}

Another important consequence of this theorem is that at least some (possibly non-standard) Gluck twists embed in the 4-sphere. Recall from the introduction that a homotopy 4-ball is called \bit{Schoenflies} if it embeds in the 4-sphere. 

\begin{theorem}\label{thm:schoenfliesball}
Suppose that a 2-sphere $S\subset S^4$ admits a decomposition into two ribbon disks, one of which has undisking number equal to one. Then, $\Sigma_S$ is a Schoenflies ball. 
\end{theorem}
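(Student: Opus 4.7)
The plan is to derive this corollary as an essentially immediate consequence of Theorem \ref{thm:main}. By the conventions established in the introduction, a Schoenflies ball is a homotopy $4$-ball that embeds in $S^4$, so the conclusion ``$\Sigma_S$ is a Schoenflies ball'' must be interpreted as ``the Gluck ball $\Sigma_S^\circ$ embeds in $S^4$,'' consistent with the mild notational abuse used for Gluck balls elsewhere in the paper.

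The hypothesis on $S$ is exactly the hypothesis of Theorem \ref{thm:main}, so we obtain a diffeomorphism $W_S := \Sigma_S^\circ \times I \cong B^5$. Taking boundaries gives $\partial B^5 \cong S^4$ on one side, while on the other,
\[
\partial W_S \;=\; \bigl(\Sigma_S^\circ \times \{0\}\bigr) \;\cup\; \bigl(\partial \Sigma_S^\circ \times I\bigr) \;\cup\; \bigl(\Sigma_S^\circ \times \{1\}\bigr),
\]
which, after smoothing corners along the standard product collar on $\partial \Sigma_S^\circ \cong S^3$, is diffeomorphic to the double $\Sigma_S^\circ \cup_{S^3} -\Sigma_S^\circ$. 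Composing these two identifications exhibits $\Sigma_S^\circ \times \{0\}$ as a codimension-zero submanifold of $S^4$, which is precisely the definition of a Schoenflies ball.

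There is no serious obstacle here: all of the geometric content has already been carried out inside Theorem \ref{thm:main}. The only minor point to check is the standard fact that $\partial(M \times I)$ is smoothly the double of $M$ for a compact manifold-with-boundary $M$, which is routine via a product-collar argument. In particular, this argument also recovers the equivalence noted in Remark \ref{rem:equiv}, so one could alternatively phrase the conclusion as saying that the decomposition $S^4 \cong \Sigma_{S \cs -S}$ realizes $\Sigma_S^\circ$ as one of the two complementary pieces of a separating $S^3$.
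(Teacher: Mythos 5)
Your argument is correct and is precisely the (implicit) proof the paper intends: by Theorem \ref{thm:main}, $\Sigma_S^\circ \times I \cong B^5$, and taking boundaries realizes $\Sigma_S^\circ$ as a codimension-zero submanifold of $S^4 = \partial B^5$, which is exactly the content already recorded in Remark \ref{rem:equiv}. No issues.
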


Thus, Theorem \ref{thm:schoenfliesball} (in combination with the previous two corollaries) gives a variety of new potential counterexamples to the 4-dimensional Schoenflies conjecture. Interestingly, by a theorem of Gompf \cite{Gom91}, embeddings of such homotopy 4-balls in $S^4$ (and consequently their complements) are unique up diffeomorphism (in fact, up to smooth isotopy) and so proving such Gluck balls are standard is equivalent to verifying the Schoenflies conjecture in this case (see \cite{Gab22}). We end with the following interesting question.

\begin{question}
    Are all Gluck twists diffeomorphic to Schoenflies balls? In other words, does $\Sigma_S^\circ$ always smoothly embed in $S^4$?
\end{question}

\bibliographystyle{alpha}
\bibliography{references}

\end{document}